\documentclass[12pt]{article}
\usepackage[a4paper,text={6.8in,9.7in},centering]{geometry}
\usepackage[T1]{fontenc} 
\usepackage[utf8]{inputenc} 
\usepackage{amsmath} 
\usepackage{amssymb} 
\usepackage{amsfonts} 
\usepackage{amsthm} 
\usepackage{empheq} 
\usepackage{braket} 
\usepackage{esint} 
\usepackage{enumitem} 
\usepackage{mleftright} 
\usepackage{microtype} 
\usepackage{url}
\usepackage[toc,page]{appendix} 
\mleftright

\theoremstyle{plain}
\newtheorem{theorem}{Theorem}[section]
\newtheorem{proposition}[theorem]{Proposition}
\newtheorem*{proposition*}{Proposition}
\newtheorem{lemma}[theorem]{Lemma}
\newtheorem{corollary}[theorem]{Corollary}
\newtheorem{definition}[theorem]{Definition}
\newtheorem*{definition*}{Definition}
\theoremstyle{remark}
\newtheorem{remark}[theorem]{Remark}

\newcommand{\dd}{\,\mathrm{d}}
\newcommand{\abs}[1]{\lvert #1 \rvert}
\newcommand{\norm}[1]{\lVert #1 \rVert}

\newcommand{\R}{\mathbb{R}}
\newcommand{\A}{\mathbb{A}}
\newcommand{\CC}[1]{\A #1:#1}
\renewcommand{\H}{\mathcal{H}}

\usepackage{xcolor}

\title{Uniform rectifiability of brittle fractures\\ in linear elasticity}

\author{C. Labourie}

\date{}

\begin{document}

\maketitle

\abstract{
    We prove the uniform rectifiability of brittle fractures in abritrary dimension.
    The existing approach for the Mumford-Shah functional, which relies on separation-type properties of the singular set, faces serious obstacles in the Griffith setting due to the lack of coarea formula for the symmetric gradient.
    We present an alternative route to uniform rectifiability for free-discontinuity problems by proving that cracks have ``plenty of big projections''.
}

\bigskip

\noindent{\bf Keywords:}
Griffith functional; free discontinuity problems; uniform rectifiability; brittle fractures; linear elasticity.

\smallskip
\noindent{\bf MSC 2020:} 49Q20 (Primary) 74G65, 74R10 (Secondary).

\section{Introduction}

Variational models of fracture describe the formation of cracks as the outcome of an energy minimization principle.
Let $\Omega \subset \R^N$ be a bounded open set representing the reference configuration of a linearly elastic body.
When the body $\Omega$ is subjected to a prescribed boundary deformation, it may deform and store energy, or release this energy by breaking.
The equilibrium configuration is found by minimizing a functional that balances the bulk elastic energy with the energy required to create a crack, namely
\begin{equation*}
    \int_{\Omega} f(x,e(u)) \dd{x} + \int_{J_u} \phi(x,u^+,u^-,\nu_u) \dd{\H^{N-1}},
\end{equation*}
where $e(u) = (\nabla u + \nabla u^T)/2$ denotes the symmetrized gradient of $u$, the set $J_u$ is the discontinuity set of $u$, $u^{\pm}$ are the traces of $u$ on each side of $J_u$ and $\nu_u$ is a unit normal to the crack.
In the context of Griffith's brittle fracture, the energy required to produce a crack is proportional to the surface measure of the crack, yielding the Griffith functional
\begin{equation*}
    \int_{\Omega} f(x,e(u)) \dd{x} + \H^{N-1}(J_u).
\end{equation*}
The existence of minimizers for such functionals is established in the space $GSBD(\Omega)$ of generalized functions with bounded deformations \cite{dalmaso, CC2020}.
A noteworthy feature of this framework is that the rectifiability of the crack is directly provided by the general theory of $GSBD$ functions.

Let us recall that a set $E \subset \R^N$ is said to be rectifiable of dimension $N-1$ if it can be covered by countably many Lipschitz images of $\R^{N-1}$, except for a set of $\H^{N-1}$ measure zero.
This is equivalently characterized by the fact that $E$ blows-up as a hyperplane at $\H^{N-1}$ almost-every point.
Rectifiability is a flexible and useful notion in geometric measure theory, but it is purely qualitative, which is not effective for describing fractures.
As cracks arise as minimizers of a functional, one expects them to have scale-invariant quantitative properties, rather than merely good asymptotic behavior at generic points.
Uniform rectifiability is a quantitative form of rectifiability which is more natural in this setting.

As a first definition, a uniformly rectifiable set is a closed, Ahlfors-regular set which is contained in a reasonnable parametrization of $\R^{N-1}$.
\begin{definition*}
    We say that a closed set $E \subset \R^N$ is a uniformly rectifiable set if there is a constant $C \geq 1$ such that
    \begin{equation*}
        C^{-1} r^{N-1} \leq \H^{N-1}(E \cap B(x,r)) \leq C r^{N-1} \quad \text{for all $x \in E$, $0 < r < \mathrm{diam}(E)$}
    \end{equation*}
    and if there is a weight $\omega \in \mathcal{A}_1$ and an $\omega$-regular parametrization $z : \R^{N-1} \to \R^N$ such that $E \subset z(\R^{N-1})$.
\end{definition*}

The Muckenhoupt classes $\mathcal{A}_p$ arose in the 1970s and 1980s as the appropriate conditions for the weighted boundedness of many classical operators in harmonic analysis. 
The condition $\omega \in \mathcal{A}_1$ means that $\omega$ is a positive function in $L^1_{\mathrm{loc}}(\R^N)$ such that
\begin{equation*}
    \fint_{B(x,r)} \omega(y) \dd{y} \leq C \mathrm{ess. inf}_{y \in B(x,r)} \omega(y) \quad \text{$\forall$ ball $B(x,r) \subset \R^N$}.
\end{equation*}
A mapping $z : \R^{N-1} \to \R^N$ is said to be $\omega$-regular if there is $C \geq 1$ such that
\begin{equation*}
    \abs{z(x) - z(y)} \leq C \biggl(\int_{B((x+y)/2,\abs{x-y})} \omega(u) \dd{u}\biggr)^{1/(N-1)} \quad \forall x,y \in \R^{N-1}
\end{equation*}
and
\begin{equation*}
    \int_{\set{y \in \R^d | z(y) \in B(x,r)}} \omega(y) \dd{y} \leq C r^{N-1} \quad \text{$\forall$ ball $B(x,r) \subset \R^N$.} 
\end{equation*}
One can think of $z$ as being Lipschitz in a reasonnable weighted space $(\R^{N-1}, \omega \dd{x})$.
The difference with plain rectifiability is that a uniformly rectifiable set $E$ is contained in a single surface (instead of countable many) and we have a uniform control on $E$ at all scales and locations through the constant $C$.
This prevents the set from ever being too scattered for instance, but still allows cusps and self-intersections to some extent.

An alternative way of characterizing uniformly rectifiable sets is by ``big pieces of biLipschitz images'' (BPBI), \cite[Theorem 1.57]{DS93}.
\begin{proposition*}
    A closed set $E \subset \R^N$ is uniformly rectifiable if and only if there exists a constant $C \geq 1$ such that for all $x \in E$ and $0 < r < \mathrm{diam}(E)$,
    \begin{equation*}
        C^{-1} r^{N-1} \leq \H^{N-1}(E \cap B(x,r)) \leq C r^{N-1}
    \end{equation*}
    and there exists a compact subset $A \subset \R^{N-1}$ and a mapping $f : A \to \R^N$ such that
    \begin{equation*}
        C^{-1} \abs{z - w} \leq \abs{f(z) - f(w)} \leq C \abs{z - w} \quad \forall z,w \in A
    \end{equation*}
    and
    \begin{equation*}
        \H^{N-1}(E \cap f(A) \cap B(x,r)) \geq C^{-1} r^{N-1}.
    \end{equation*}
\end{proposition*}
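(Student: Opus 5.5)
This proposition is cited from David--Semmes \cite[Theorem 1.57]{DS93}. I would prove it by passing through the intermediate notion of \emph{big pieces of Lipschitz images} (BPLI), splitting the argument into the two implications.

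\textbf{Uniform rectifiability implies BPBI.} Assume $E \subset z(\R^{N-1})$ with $z$ an $\omega$-regular parametrization and $\omega \in \mathcal{A}_1$. Fix $x \in E$ and $0<r<\mathrm{diam}(E)$.

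1. Let $F = z^{-1}(B(x,r))$. The second $\omega$-regularity condition gives $\int_F \omega \le C r^{N-1}$.
2. Since $z$ is $\omega$-Lipschitz, the area formula gives
   \[ \H^{N-1}(E\cap B(x,r)) \le C\int_F \omega \, dy. \]
   Combined with Ahlfors regularity of $E$, this yields $\int_F \omega \approx r^{N-1}$.
3. Use the $\mathcal{A}_1$ property, which makes $\omega\,dy$ doubling and comparable to its infimum on balls. Inside a ball $B_0$ of $\omega$-mass about $r^{N-1}$, take the subset $A' \subset B_0$ where a maximal-function estimate for $\omega^{1/(N-1)}$ holds. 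On $A'$, $z$ is Lipschitz with respect to the rescaled metric, and $z(A')$ still covers a proportion of $E\cap B(x,r)$. This gives BPLI.
4. Upgrade BPLI to BPBI by a David-type lemma. A Lipschitz map $g:\R^{N-1}\to\R^N$ whose image has measure $\gtrsim r^{N-1}$ admits a compact set $A$ on which $g$ is bi-Lipschitz, with $\H^{N-1}(g(A))\gtrsim r^{N-1}$.

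The main obstacle is the quantitative bi-Lipschitz selection in step 4. I would prove it by a stopping-time decomposition on dyadic cubes. One selects cubes where the Jacobian of $g$ is large (comparable to $\H^{N-1}(g(Q))/|Q|$). Inside such a cube, a sub-piece of $g$ is close to affine and nondegenerate, by the Dorronsoro/Jones $\beta$-number estimates. Disjointness of images then gives the lower Lipschitz bound, possibly after passing to a further big subset via a Vitali-type covering. This is where the full strength of the David--Semmes machinery enters.

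\textbf{BPBI implies uniform rectifiability.} Ahlfors regularity is assumed. It remains to build the global parametrization.

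1. Show that BPBI at every scale and location implies the strong geometric lemma. This means the Jones $\beta$-number square function
   \[ \beta_E(x,r)^2\, \frac{d\H^{N-1}(x)\,dr}{r} \]
   is a Carleson measure. To see this, bi-Lipschitz pieces are locally bi-Lipschitz graphs up to small sets, Dorronsoro's theorem gives Carleson estimates on each piece, and a John--Nirenberg/good-$\lambda$ argument spreads the estimate from big pieces to all of $E$.
2. From the Carleson condition, construct a corona-type decomposition: a stopping-time family of Lipschitz graphs approximating $E$ on trees of cubes, with tops satisfying a packing condition.
3. Glue these graphs into a single map $z$. The weight is defined by
   \[ \omega = \sum_{\text{tops}} \ell(Q)^{\,\cdot}\,\mathbf{1}_{\text{region}} + \ldots \]
   and the packing condition is precisely what makes $\omega$ satisfy the $\mathcal{A}_1$ inequality and $z$ satisfy $\omega$-regularity.

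Of these steps, step 3 (the gluing) is the most technical. I would follow the construction in \cite{DS93} rather than reprove it here.
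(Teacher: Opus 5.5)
The paper gives no proof of this proposition; it quotes \cite[Theorem 1.57]{DS93}. Your outline therefore has to stand on its own as a reconstruction of the David--Semmes theory.

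Your forward direction, UR $\Rightarrow$ BPLI $\Rightarrow$ BPBI, is a legitimate route. The last arrow is the Jones--David theorem that a Lipschitz map with large image is bi-Lipschitz on a large set. You should use the version in which boundedly many bi-Lipschitz pieces cover all but a small part of the image. Apply it to (a Lipschitz extension of) $z$ restricted to $A' \cap z^{-1}(E\cap B(x,r))$, so that the selected piece actually lies in $E$. Step 3 also needs two repairs:
\begin{enumerate}
\item $z^{-1}(B(x,r))$ need not lie in a single ball. Cover it by boundedly many balls of $\omega$-mass comparable to $r^{N-1}$; the second $\omega$-regularity condition bounds their number.
\item Take $A' = \{\mathcal{M}\omega \le K \fint_{B_0}\omega\}$, with the maximal function of $\omega$ rather than of $\omega^{1/(N-1)}$. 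The smallness of $\omega(B_0 \setminus A')$ comes from the $\mathcal{A}_\infty$ (reverse H\"older) property that $\mathcal{A}_1$ weights enjoy. The defining $\mathcal{A}_1$ inequality only bounds Lebesgue measure by $\omega$-measure, which is the wrong direction here.
\end{enumerate}

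The genuine gap is step 1 of the converse. Dorronsoro's theorem, applied to a Lipschitz extension of $f$, does give a Carleson packing for the flatness of the piece $E \cap f(A)$ around its density points. Your phrase that bi-Lipschitz pieces are ``locally bi-Lipschitz graphs up to small sets'' is only a qualitative Rademacher-type fact with no uniform scale, so it cannot feed a Carleson estimate.

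The real problem is that the geometric lemma concerns $\beta_E(y,t)$, which depends on all of $E \cap B(y,t)$, and monotonicity runs the wrong way: $\beta_{E\cap f(A)} \le \beta_E$. The part $E \setminus f(A)$ may carry a fixed proportion of the mass of $B(y,t)$ far from the plane approximating the piece. Nothing in your argument controls how often this happens. A John--Nirenberg or good-$\lambda$ argument only propagates quantities that the big piece itself controls, and the flatness of $E$ is not one of them. So step 1 as written does not produce the geometric lemma, and this is exactly where the difficulty of BPBI $\Rightarrow$ UR lies.

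What works is to propagate something the pieces do control:
\begin{enumerate}
\item $L^2$ bounds for singular integral operators with odd kernels hold on bi-Lipschitz images.
\item These bounds pass from big pieces to $E$ by David's good-$\lambda$ argument.
\item The David--Semmes theorem that such bounds imply the geometric lemma then closes the loop.
\end{enumerate}
Alternatively, quote the theorem of \cite{DS93} that big pieces of uniformly rectifiable sets give uniform rectifiability.

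Note also that for $N \ge 4$ the square Carleson condition must be stated with the $L^q$-averaged numbers $\beta_q$. The sup-type $\beta$ of this paper does not satisfy it in general, even on Lipschitz graphs.
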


It is not difficult to to see that this is a rectifiability property\footnote{
    Decompose $E = F \cup G$ as the disjoint union of a rectifiable part $F$ and a purely unrectifiable part $G$.
    As Lipschitz images are rectifiable, BPBI implies that for all $x \in E$, $\liminf_{r \to 0} r^{1-N} \H^{N-1}(F \cap B(x,r)) \geq C^{-1}$.
One deduces that $G = E \setminus F$ has zero $\H^{N-1}$ measure by standard density theorems.}.
There is also a variant of this property called ``big pieces of Lipschitz graphs'' (BPLG), where one replaces biLipschitz images by Lipschitz graphs: for all $x \in E$ and $0 < r < \mathrm{diam}(E)$, there exists a $C$-Lipschitz graph $\Gamma$ of dimension $N-1$ such that
\begin{equation*}
    \H^{N-1}(E \cap \Gamma \cap B(x,r)) \geq C^{-1} r^{N-1}. 
\end{equation*}
This condition is strictly stronger than uniform rectifiability, as shown by an example of Hrycak (the ``venetian blind'', see \cite[Subsection 1.2]{Orponen}).

In this paper, we shall prove that brittle fractures are locally contained in a uniformly rectifiable set with big pieces of Lipschitz graphs.
We refer to Section \ref{section_definition} for various notations and the definition of topological quasiminimizers.

\begin{theorem}[Uniform rectifiability of brittle fractures]
    There exists constants $\varepsilon_0 > 0$ and $C \geq 1$ (which depends on $N$, $M$, $\A$) such that the following holds.
    If $(u,K)$ is a Griffith topological quasiminimizer with gauge $h$ in a domain $\Omega$, then for all $x \in K$ and all $r > 0$ with $B(x,2r) \subset \Omega$ and $h(2r) \leq \varepsilon_0$,
    \begin{equation*}
        \text{$K \cap B(x,r)$ is contained in a uniformly rectifiable set $E$ (with BPLG) of constant $C$.}
    \end{equation*}
\end{theorem}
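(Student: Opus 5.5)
The plan follows the abstract's announcement: establish uniform rectifiability through the criterion of ``big projections'' (David--Semmes), avoiding separation properties of $K$. First I would prove local Ahlfors regularity: there is $C\geq 1$ such that for all $y \in K$ and $0<t<r$ with $B(y,t)\subset B(x,2r)$,
\begin{equation*}
C^{-1} t^{N-1} \leq \H^{N-1}(K\cap B(y,t)) \leq C t^{N-1}.
\end{equation*}
The upper bound comes from comparing $(u,K)$ with the competitor obtained by adding $\partial B(y,t)$ to $K$ and setting $u=0$ inside; the gauge condition $h(2r)\leq\varepsilon_0$ absorbs the quasiminimality error. The lower bound is the elimination/density property for Griffith quasiminimizers, which I would take from the earlier sections or known results.

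Next I would show that $K$ has plenty of big projections in the sense of David--Semmes. For every $y\in K$ and small $t$, there should be a ball $B(z,\tau t)\subset B(y,t)$ with $z\in K$ and a hyperplane $P$ such that $\H^{N-1}(\pi_P(K\cap B(z,\tau t)))\geq c(\tau t)^{N-1}$. I would argue by contradiction. If every projection of $K\cap B(z,s)$ were small, then for most lines parallel to a direction $\nu$, and for several independent directions, the lines would miss $K$ in $B(z,s)$. A Korn--Poincaré type estimate on these $K$-free line segments would then control the slice increments $\langle u(p+\ell\nu)-u(p),\nu\rangle$ through $\int \abs{e(u)}$. With directions rich enough, this would show that $u$ is close, away from a small set, to a single rigid motion in $B(z,s)$. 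One can then build a competitor: remove $K\cap B(z,s/2)$, glue in the rigid motion with a cutoff, and add a small piece of boundary where the projection fails. This lowers the energy by about $s^{N-1}$, which contradicts the lower Ahlfors bound. The topological constraint has to be checked, namely that the new crack still separates the same boundary points. This works because we only remove parts of $K$ inside a ball where $u$ is already nearly rigid, and we close up using a sphere piece. Rigidity estimates for $GSBD$ (a piecewise Korn inequality) would stand in for the coarea formula.

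Finally I would apply David--Semmes' theorem, that Ahlfors regularity together with big projections at all scales gives big pieces of Lipschitz graphs and hence uniform rectifiability. There is a localization step: extend $K\cap B(x,r)$ to a globally Ahlfors-regular set $E$, for instance by adding a Whitney-type union of spheres or a hyperplane piece outside $B(x,r)$. The projection property must be checked at large scales too, where it is trivial for the added pieces.

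The main obstacle is the second step, the contradiction argument. It requires turning ``$K$ has small projections in $N$ independent directions'' into quantitative rigidity of $u$. That in turn requires a line-slicing estimate for the symmetric gradient that tolerates an exceptional set of lines. It also requires that the competitor respects the topological quasiminimality class.
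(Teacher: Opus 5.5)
There is a genuine gap at the step that carries the proof. What you propose to establish is one hyperplane $P$ with $\H^{N-1}(\pi_P(K\cap B(z,\tau t)))\geq c(\tau t)^{N-1}$. That is the David--Semmes \emph{big projections} (BP) condition, and the ``David--Semmes theorem'' you invoke at the end does not exist. In fact BP together with Ahlfors regularity does not even imply rectifiability. The four-corner Cantor set $C\times C$, with $C=\{\sum_i a_i 4^{-i} : a_i\in\{0,3\}\}$, is $1$-regular and purely unrectifiable. Yet $x+2y=\sum_i(a_i+2b_i)4^{-i}$, whose digits $a_i+2b_i$ run over $\{0,3,6,9\}$, fills $[0,3]$. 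By self-similarity, this fixed direction gives a big projection in every ball centred on the set.

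The result that works is Orponen's \cite{Orponen}: \emph{plenty} of big projections implies BPLG. This means there is a $V$ such that $\H^{N-1}(p_W(K\cap B(x,r)))\geq C^{-1}r^{N-1}$ for \emph{every} $W$ with $\mathrm{dist}(V,W)\leq\delta$. Your contradiction scheme assumes ``every projection is small'', so it can only conclude ``some projection is big'', i.e.\ BP. A contradiction proof of PBP would only let you assume one small-projection direction near each prescribed direction. The rigidity step would then have to run on finitely many perturbed directions, and your sketch does not address this.

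The paper gets PBP differently:
\begin{itemize}
\item Radial rays from $2N$ well-spread points give only the \emph{averaged} property of projections (Theorem \ref{thm_projection}).
\item Its stability under Hausdorff limits, plus compactness, yields many balls with small $\beta$ (Proposition \ref{prop_flat}).
\item In balls where $\beta+\omega_p$ is small, either $K$ separates $D_1,D_2$, so all $W$ near $\nu_0^\perp$ are almost covered, or Lemmas \ref{lem_jump} and \ref{lem_slicing} make the holes small for all $\nu$ near a suitably chosen $\tau_0$ (Proposition \ref{prop_projection}).
\end{itemize}

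Your competitor argument also misses two ingredients.
\begin{itemize}
\item Removing $K$ only contradicts the lower Ahlfors bound if the normalized elastic energy $\omega_p$ is small in the ball. ``$u$ is close to a rigid motion'' is only measured against $\int\abs{e(u)}^p$, which can be of order $s^{N-1}$ (e.g.\ near crack tips). In that case the glued competitor costs as much as it saves. The paper first selects such balls by the Carleson estimate (Corollary \ref{cor_carleson}), and your unspecified choice of $B(z,\tau t)$ has to do the same.
\item The topological constraint is about separation by $\widetilde K$, not about $u$ being nearly rigid. The paper adds the radial shadow $\Sigma=\{z\in\partial B(0,\rho) : [z,x_0]\cap K\neq\emptyset\}$. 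This set is small precisely because the Grassmannian average of projections is small, and any two crossing points outside $\Sigma$ are then joined through $x_0$. Segments parallel to a few directions do not join different crossing points.
\end{itemize}
Finally, the piecewise Korn inequality is currently available only in the plane, so it cannot serve in dimension $N$.
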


The Griffith energy presents substantial new challenges compared to its scalar analogue, the Mumford-Shah functional.
We begin by briefly recalling how David and Semmes \cite{DS96,DS96bis,DavidBOOK} established uniform rectifiability for Mumford-Shah minimizers, in order to highlight these difficulties.
Let $(u,K)$ be a local Mumford-Shah minimizer in a domain $\Omega$ (without fidelity term, for simplicity).
An easy case is when $\nabla u = 0$ on $\Omega \setminus K$.
In this situation, $K$ induces a minimal partition of $\Omega$ and locally satisfies the so-called ``Condition B'': for all $x \in K$ and $r > 0$ such that $B(x,2r) \subset \Omega$, 
\begin{gather*}
    \text{there exists two $B_1, B_2 \subset B(x,r)$ of radius $\geq C^{-1} r$}\\
    \text{which lie in distinct connected components of $\Omega \setminus K$.}
\end{gather*}
This is a well-known sufficient condition for uniform rectifiability.
Heuristically, if $K$ were not rectifiable, it would be too scattered to separate points properly.
In general however, Mumford-Shah minimizers do not satisfy Condition B because of crack-tips.
Nevertheless, in a ball $B$ where the elastic energy $\int_B \abs{\nabla u}^2 \dd{x}$ is small compared to the surface term $\H^{N-1}(K \cap B)$, one can expect that $K$ should be close to a minimal partition.
In such a ball, David and Semmes proved that $K$ satisfies Condition B up to ``filling the holes'' by a level set $\set{x \in B \setminus K | u(x) = t}$ of small, controlled measure.
Their argument relies on a clever use of the co-area formula to control the size of a level set.
Moreover, a Carleson estimate provides shows quantitatively that this occurs in most balls centred on $K$.
Thus, the singular set $K$ still attempts to separate its complement at most locations and scales, and this weaker form of separation is sufficient to deduce uniform rectifiability.
Adapting this approach to the Griffith setting is highly challenging, since no counterpart of the coarea formula exists for the symmetrized gradient.

In a series of papers \cite{Friedrich1, Friedrich2, Solombrino}, Friedrich established a piecewise Korn inequality which provides fine quantitative information on $GSBD$ functions and their jump set.
This powerful tool was used in \cite{FLK3} to bypass the coarea formula when controlling the size of holes, thereby making it possible to adapt the proof of David and Semmes to the Griffith setting.
Unfortunately, the piecewise Korn inequality is, as of now, only available in the plane.

Prior to the work of David and Semmes, a first quantitative rectifiability property was introduced for Mumford-Shah minimizers in the plane by Dibos and Koepfler \cite{dibos,dibos2}, and later extended to all dimensions by Solimini \cite{Solimini}; this is the so-called ``property of projections''. We state here a slightly weaker variant.
We say that a closed Ahlfors-regular set $E \subset \R^N$ has the property of projection if, for all $x \in E$ and all $0 < r < \mathrm{diam}(E)$,
\begin{equation*}
    \int_{G(N-1,N)} \H^{N-1}\bigl(p_V(E \cap B(x,r)\bigr) \dd{V} \geq C^{-1} r^{N-1},
\end{equation*}
where $G(N-1,N)$ denotes the Grassmanian space of hyperplanes equipped with its canonical measure, and $p_V$ is the orthogonal projection onto $V$.
This implies rectifiability by the Besicovitch-Federer projection theorem and, as observed by Dibos and Koepfler, is stable under Hausdorff convergence.
It is conjectured that the property of projection implies uniform rectifiability (see \cite[Open question 24.33]{DavidBOOK} and \cite{DS93,DS93bis}); however, to the best of the author's knowledge, this remains a difficult open problem.
Two major recent contributions are due to Dabrowski \cite{dabrowski}, who proved the conjecture for one-dimensional sets, and Orponen \cite{Orponen}, who showed that BPLG is equivalent to a stronger condition called ``plenty of big projections'' (PBP): for all $x \in E$ and $0 < r < \mathrm{diam}(E)$, there exists $V \in G(N-1,N)$ such that for all $W \in G(N-1,N)$ with $\mathrm{dist}(V,W) \leq C^{-1}$,
\begin{equation*}
    \H^{N-1}\bigl(p_W(K \cap B(x,r)\bigr) \geq C^{-1} r^{N-1}.
\end{equation*}
The main contribution of this paper is to prove that cracks have plenty of big projections. This provides a new route to uniform rectifiability for free-discontinuity functionals, alternative to the 1993 work of David and Semmes.

We begin by proving a first rectifiability theorem for Griffith minimizers, namely the property of projections (Theorem \ref{thm_projection}).
As the excision method of Dibos--Koepfler \cite{dibos,dibos2} and Solimini \cite{Solimini} does not extend readily to the symmetric gradient, this poses the first challenge in our paper.
Our argument relies on a Federer-Fleming type projection estimate, yielding many rays that do not meet the crack and along which the directional variation of $u$ can be controlled.
While the property of projections does not yet give uniform rectifiability, it has the key feature of being stable under Hausdorff limits.
We use this in a contradiction argument showing that every ball contains a smaller shifted ball where the crack has a small flatness (Proposition \ref{prop_flat}).
In such a ball, we can then apply the slicing theorem of \cite{LL3} to control the size of holes through a projection.

This line of approach produces the ``flat in many places'' lemma in a self-contained way, instead of relying on the theory of uniformly rectifiable sets, and yields at once both the uniform concentration property and plenty of big projections.

\section{Definitions}\label{section_definition}

Our working space is an open set $\Omega \subset \R^N$, where $N \geq 2$.
We say that a constant is \emph{universal} when it depends only on $N$.
Given a set $A$, the notation $A \subset \subset \Omega$ stands for $\overline{A} \subset \Omega$.
We define a \emph{rigid motion} as an affine map $a : \R^N \to \R^N$ of the form $a(x) = b + Ax$, where $b \in \R^N$ and $A \in \R^{N \times N}$ is a skew-symmetric matrix.

\medskip

\noindent{\bf Elasticity tensor.}
Given two matrices $\xi, \eta \in \R^{N \times N}$, the notation $\xi : \eta$ denotes the Frobenius inner product of $\xi$ and $\eta$,
\begin{equation*}
    \xi : \eta := \sum_{ij} \xi_{ij} \eta_{ij}
\end{equation*}
and $\abs{\xi} = \sqrt{\xi : \xi}$ the Frobenius norm.
We fix for the whole paper a fourth-order elastic tensor $\A$ such that for which there exists a constant $\lambda \geq 1$ such that
\begin{equation*}
    \A(\xi - \xi^T) = 0 \quad \text{and} \quad \A \xi : \xi \geq \lambda \abs{\xi + \xi^T}^2 \quad \text{for all $\xi \in \R^{N \times N}$}.
\end{equation*}

\medskip

\noindent{\bf Coral pairs.}
A \emph{pair} $(u,K)$ consists of a relatively closed subset $K \subset \Omega$ and a Sobolev function $u \in W^{1,2}_{\mathrm{loc}}(\Omega \setminus K;\R^N)$.
We say that a pair $(u,K)$ has a \emph{locally finite energy} if for all ball $B \subset \subset \Omega$, 
\begin{equation*}
    \int_{B \setminus K} \abs{e(u)}^2 \dd{x} + \H^{N-1}(K \cap B) < +\infty,
\end{equation*}
where $e(u) = (\nabla u + \nabla u^T)/2$.
We say that a relatively closed set $K \subset \Omega$ is \emph{coral} if for all $x \in K$, for all $r > 0$,
\begin{equation*}
    \H^{N-1}(K \cap B(x,r)) > 0,
\end{equation*}
where $\H^{N-1}$ is the Hausdorff measure of dimension $N-1$.
A pair $(u,K)$ is also called coral if $K$ is coral in the above sense.

\medskip

\noindent{\bf Competitors.}
Let $(u,K)$ be a pair.
Let $B$ be an open ball such that $B \subset \subset \Omega$.
A \emph{competitor} of $(u,K)$ in $B$ is a pair $(\widetilde{u},\widetilde{K})$ such that
\begin{equation*}
    \widetilde{K} \setminus B = K \setminus B \quad \text{and} \quad \widetilde{u} = u \quad \text{a.e. in} \quad \Omega \setminus \left(K \cup B\right).
\end{equation*}
A \emph{topological competitor } of $(u,K)$ in $B$ is a competitor $(\widetilde{u},\widetilde{K})$ such that
\begin{equation*}
    \text{all points $x,y \in \Omega \setminus (K \cup B)$ which are separated by $K$ are also separated by $\widetilde{K}$}.
\end{equation*}
This means that if $x,y \in \Omega \setminus (K \cup B)$ belongs to different connected component of $\Omega \setminus K$, they also belong to different connected components of $\Omega \setminus \widetilde{K}$.
This notion was introduced by Bonnet \cite{Bonnet} and appears naturally when one look at the minimality properties of limits of minimizers, such as blow-up limits for instance.

\medskip

\noindent{\bf Quasiminimizers.}
We are interested in quasiminimizers in the sense of David and Semmes \cite{DavidBOOK,DS20}, which are meant to represent minimizers of functionals with possibly highly irregular coefficients.
One could be interested in two conditions of such type.
First, assume that $u \in GSDB^2(\Omega)$ is a local minimizer of a functional of the form
\begin{equation*}
    \int_{\Omega} f(x,e(u)) \dd{x} + \int_{J_u} \psi(x,u^+,u^-,\nu_u) \dd{\H^{N-1}},
\end{equation*}
where $\psi \colon \Omega \times \R^N \times \R^N \times  \mathbb{S}^{N-1} \to \R_+$ and $f \colon \Omega \times \R^{N \times N}_{\mathrm{sym}} \to \R_+$ are Borel functions satisfying $\psi(x,a,b,\nu) = \psi(x,b,a,  - \nu)$, 
\begin{equation*}
    M^{-1} \leq \psi \leq M, 
\end{equation*}
and
\begin{equation*}
    \big\lvert f(x,\xi) - \CC{\xi} \big\rvert \leq g(\abs{\xi}^2)
\end{equation*}
for some non-decreasing function $g \colon [0,+\infty) \to [0,+\infty)$ such that $\int_{1}^{+\infty} s^{-2} g(s) \dd{s} < +\infty$ (for instance $g(t) = M(1 + t^{q/2})$ with $0 \leq q < 2$).
These assumptions allow for non heterogeneous bulk density that are pertubations of $\CC{\xi}$ up to a lower-order term, as well as very general surfaces terms accounting for anisotropy and (non-degenerate) cohesive material properties.
The minimality of $u$ entails (see the \cite[proof of Theorem 2.7]{FLK2}) that for all ball $B(x,r) \subset \subset \Omega$ and all $GSBD$ competitors $v$ of $u$ in $B(x,r)$,
\begin{multline}\label{eq_weak_quasi}
    \int_{B(x,r)} \CC{e(u)} \dd{x} + M^{-1} \H^{N-1}\bigl(J_u \cap B(x,r)\bigr)\\
    \leq \int_{B(x,r)} \CC{e(v)} \dd{x} + M \H^{N-1}\bigl(J_v \cap B(x,r)\bigr) + h(r) r^{N-1},
\end{multline}
where $h : (0,+\infty) \to [0,+\infty[$ is some non-decreasing function such that $\lim_{r \to 0} h(r) = 0$.
If instead $u \in GSDB^2(\Omega)$ is a local minimizer of a functional of the form
\begin{equation*}
    \int_{\Omega} f(x,e(u)) \dd{x} + \mu(J_u),
\end{equation*}
where $\mu$ is a measure such that $M^{-1} \H^{N-1} \leq \mu \leq M \H^{N-1}$, one can remove $\mu(J_u \cap J_v)$ on both sides and finds a stronger condition: for all ball $B(x,r)  \subset \subset \Omega$ and all $GSBD$ competitors $v$ of $u$ in $B(x,r)$,
\begin{multline}\label{eq_strong_quasi}
    \int_{B(x,r)} \CC{e(u)} \dd{x} + M^{-1} \H^{N-1}\big(J_u \setminus J_v)\\
        \leq \int_{B(x,r)} \CC{e(v)} \dd{x} + M \H^{N-1}\big(J_v \setminus J_u\big) + h(r) r^{N-1},
    \end{multline}
    where $h : (0,+\infty) \to [0,+\infty[$ is some non-decreasing function such that $\lim_{r \to 0} h(r) = 0$.

    Let us emphasizes that condition (\ref{eq_weak_quasi}) is genuinely weak.
    It is primarily effective with competitors that try to eliminate $J_u$ completely from a given ball.
    On the other hand, condition (\ref{eq_strong_quasi}) is more subtle and interesting.
    The present author expects that it may prevents components of $\Omega \setminus K$ from being isolated, from accumulating at a point, and from having cusps (at least in low dimensions, see \cite{LabTepl} for the case where there is no elastic energy).
    In this work, we adopt for (\ref{eq_weak_quasi}) as our definition of quasiminimality, since our technique is flexible enough to deal with this broad class.
    While the existence of Griffith minimizers is established in the space $GSBD$ \cite{CC2020}, we shall directly work with classical pairs $(u,K)$, since the jump set of a $GSBD$ quasiminimizer is essentially closed \cite[Theorem 2.7]{FLK2}.
    We shall also take into account quasiminimizers whose competitors are required to satisfy a topological constraint.
    We define a \emph{gauge} as a nondecreasing function $h \colon (0,+\infty) \to [0,+\infty]$ satisfying $\limsup_{r \to 0} h(r) < \infty$. It need not vanish as $r \to 0$; for instance $h$ may be a small constant.

    \begin{definition}
        Let $M \geq 1$ and let $h$ be a gauge. A \emph{Griffith topological $M$-quasiminimizer} with gauge $h$ in $\Omega$ is a coral pair $(u,K)$ with locally finite energy such that for all $x \in \Omega$, for all $r > 0$ with $\overline{B}(x,r) \subset \Omega$ and for all topological competitor $(v,F)$ of $(u,K)$ in $B(x,r)$, we have
        \begin{multline*}
            \int_{B(x,r) \setminus K} \CC{e(u)} \dd{x} + M^{-1} \H^{N-1}(K \cap B(x,r)) \\\leq \int_{B(x,r) \setminus F} \CC{e(v)} \dd{x} + M \H^{N-1}(F \cap B(x,r)) + h(r) r^{N-1}.
        \end{multline*}
    \end{definition}

    \begin{remark}\label{rmk_scaling}
        We recall that if if a pair $(u,K)$ is a topological $M$-quasiminimizer in $B(x_0,r_0)$ with gauge $h$, then the pair $(u_0,K_0)$ defined by
        \begin{equation*}
            u_0(x) := r_0^{-1/2} u(x_0 + x r_0) \quad \text{and} \quad  K_0 := r_0^{-1} (K - x_0)
        \end{equation*}
        is a topological $M$-quasiminimizer in $B(0,1)$, with gauge $h_0(t) := h(r_0 t)$.
        We will always work with scale invariant quantities.
    \end{remark}

    \medskip

    \noindent{\bf Flatness.}
    Given a pair $(u,K)$ in a ball $B(x,r) \subset \R^N$, we define the \emph{flatness} $\beta_K(x,r)$ of $K$ in $B(x,r)$ by
    \begin{equation*}
        \beta_K(x,r) := \inf_{P} \sup_{y \in K \cap B(x,r)} \mathrm{dist}(y,P),
    \end{equation*}
    where $P$ ranges among affine hyperplanes passing through $x_0$ (the infimum is always attained by compactness of the Grassmann space).
    When there is no ambiguity, we simply write $\beta(x,r)$.
    It is straightforward to verify that for $0 < t \leq r$, 
    \begin{equation*}
        \beta_K(x,t) \leq \Bigl(\frac{r}{t}\Bigr) \beta_K(x,r).
    \end{equation*}

    \medskip

    \noindent{\bf Normalized elastic energy.}
    Given a pair $(u,K)$ in a ball $B(x,r) \subset \R^N$, we define the \emph{normalized elastic energy} of $u$ in $B(x,r)$ by
    \begin{equation*}
        \omega(x,r) := r^{1 - N} \int_{B(x,r) \setminus K} \abs{e(u)}^2 \dd{x}.
    \end{equation*}
    More generally, for $p \geq 1$, we set
    \begin{equation*}
        \omega_p(x,r):= r^{1-2N/p} \left(\int_{B(x,r)\setminus K} \abs{e(u)}^p \dd{x}\right)^{\frac{2}{p}}.
    \end{equation*}
    The exponent of $r$ is chosen so that $\omega_p$ is invariant under rescaling; see Remark \ref{rmk_scaling}.
    Note that $\omega_2 = \omega$, and for $p \in [1,2]$, we have $\omega_p \leq \omega$ by Hölder's inequality.

    \medskip

    For the rest of the paper, we fix a constant $M \geq 1$.
    Every quasiminimizer considered in the following will be understood to satisfy the definition with this choice of $M$, which we omit to recall for simplicity.
    We recall two useful properties of topological quasiminimizers.
    The first one is Ahlfors-regularity \cite[Theorem 2.3 (2)]{FLK2}.

    \begin{proposition}[Ahlfors-regularity]\label{prop_AR}
        There exist constants $\varepsilon_0 > 0$ and $C \geq 1$ (depending on $N$, $M$, $\A$) such that the following holds.
        Let $(u,K)$ be a topological quasiminimizer with any gauge $h$ in $\Omega$.
        For all $x \in K$, for all $r > 0$ such that $B(x,r) \subset \Omega$ and $h(r) \leq \varepsilon_0$, we have
        \begin{equation}\label{eq_AF0}
            \int_{B(x,r)} \abs{e(u)}^2 \dd{x} + \H^{N-1}(K \cap B(x,r)) \leq C r^{N-1}
        \end{equation}
        and
        \begin{equation}\label{eq_AF1}
            \H^{N-1}(K \cap B(x,r)) \geq C^{-1} r^{N-1}.
        \end{equation}
    \end{proposition}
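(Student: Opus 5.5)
The upper bound \eqref{eq_AF0} will come from a direct comparison, and the lower bound \eqref{eq_AF1} from an energy decay argument by contradiction. By Remark \ref{rmk_scaling} we may assume $x = 0$ and $r = 1$, with $h(1) \leq \varepsilon_0$.

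\emph{Upper bound.} Fix $\rho < 1$ and set $F := (K \setminus B(0,\rho)) \cup \partial B(0,\rho)$, with $v := 0$ in $B(0,\rho)$ and $v := u$ outside. This is a topological competitor in $B(0,\rho')$ for any $\rho' \in (\rho,1)$. Indeed, let $\gamma$ be a path in $\Omega \setminus F$ joining two points outside $\overline{B}(0,\rho')$. Since $\gamma$ cannot cross $\partial B(0,\rho)$, it stays in $\Omega \setminus \overline{B}(0,\rho)$. There $F$ coincides with $K$, so $\gamma$ also avoids $K$, and points separated by $K$ remain separated by $F$.

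The quasiminimality inequality then gives
\begin{equation*}
    \int_{B(0,\rho)} \CC{e(u)} + M^{-1} \H^{N-1}(K \cap B(0,\rho)) \leq M \H^{N-1}(\partial B(0,\rho)) + h(1) + (\text{terms on } B(0,\rho') \setminus B(0,\rho)).
\end{equation*}
The leftover terms tend to $0$ as $\rho' \downarrow \rho$, because the energy is locally finite. Coercivity of $\A$ (it controls $\abs{e(u)}^2$ up to $\lambda$) then bounds the left-hand side by $C$. Letting $\rho \uparrow 1$ gives \eqref{eq_AF0}.

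\emph{Lower bound.} Introduce the normalized total energy
\begin{equation*}
    \psi(y,t) := \omega(y,t) + t^{1-N} \H^{N-1}(K \cap B(y,t)),
\end{equation*}
and argue by contradiction: suppose $\H^{N-1}(K \cap B(0,1)) \leq \varepsilon$ with $\varepsilon$ tiny. The core step is a decay lemma. If $t^{1-N}\H^{N-1}(K \cap B(y,t)) \leq \varepsilon$ and $h(t) \leq \varepsilon_0$, then
\begin{equation*}
    \psi(y,\tau t) \leq \tfrac12 \psi(y,t) + C h(t),
\end{equation*}
with the crack density term staying below $\varepsilon$ at the new scale.

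To prove the lemma, I would proceed in three steps.
\begin{enumerate}
    \item By Fubini/coarea in the radial variable, choose a radius $\rho \in (t/2,t)$ such that $\H^{N-2}(K \cap \partial B(y,\rho))$ and $\int_{\partial B(y,\rho)} \abs{e(u)}^2$ are controlled by averages.
    \item Apply the Korn inequality with small exceptional set (Chambolle--Conti--Francfort / Cagnetti--Chambolle--Scardia). It provides a set $\omega \subset B(y,\rho)$ of measure $\lesssim \varepsilon^{N/(N-1)} t^N$ and perimeter $\lesssim \varepsilon t^{N-1}$, together with a $W^{1,2}$ function $w$ agreeing with $u$ outside $\omega$ and with $\int \abs{e(w)}^2 \lesssim \int \abs{e(u)}^2$.
    \item Replace $u$ in $B(y,\rho)$ by the elastic ($\A$-harmonic) solution with the boundary trace of $w$. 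Keep as crack only $K \setminus B(y,\rho)$, plus $K \cap \partial B(y,\rho)$, plus a thin neighbourhood of the boundary of the exceptional set, which has measure $\lesssim \varepsilon t^{N-1}$.
\end{enumerate}
Interior regularity of $\A$-harmonic maps gives energy decay $C\tau^N$ on $B(y,\tau t)$. The quasiminimality inequality then transfers this decay to $u$, up to errors of size $C(\varepsilon + h)$. Choosing $\tau$ and then $\varepsilon$, $\varepsilon_0$ small closes the iteration.

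Smallness at $0$ at scale $1$ gives smallness of the crack density at every $y \in K \cap B(0,1/2)$ at scale $1/2$ (density at most $2^{N-1}\varepsilon$). Iterating the lemma yields $t^{1-N}\H^{N-1}(K \cap B(y,t)) \to 0$ as $t \to 0$ for every such $y$. This contradicts the density theorem $\limsup_{t \to 0} t^{1-N}\H^{N-1}(K \cap B(y,t)) \geq c_N$ for $\H^{N-1}$-a.e.\ $y \in K$. Such points exist in $B(0,1/2)$ because $K$ is coral, so $\H^{N-1}(K \cap B(0,1/2)) > 0$.

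\emph{Main obstacle.} The hardest step is making the competitor in the decay lemma \emph{topological}. Removing most of $K$ inside $B(y,\rho)$ may merge components of $\Omega \setminus K$ outside the ball. My first attempt is to keep the full closed set $K \cap \partial B(y,\rho)$ in the competitor, together with $\partial B(y,\rho)$ wherever $u$ and $w$ differ near the sphere. To see that this is affordable, I would use the radial slicing choice that makes $\H^{N-1}(\{\text{bad part of } \partial B(y,\rho)\})$ small. The point is that, exactly as in the upper bound, separation outside the ball is then preserved by any set containing a closed "wall" on the sphere where connections could pass.
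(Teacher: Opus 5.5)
\paragraph{Assessment.} The paper does not prove this proposition; it quotes it from \cite[Theorem 2.3 (2)]{FLK2}. Your upper bound is correct. Adding the whole sphere $\partial B(0,\rho)$ makes the competitor topological, and the limits $\rho'\downarrow\rho$, $\rho\uparrow 1$ are justified by local finiteness of the energy.

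\paragraph{The topological competitor in the lower bound.} The lower bound has a genuine gap exactly where you locate the main obstacle, and your proposed fix does not work. The analogy with the upper bound breaks down because there the wall was the \emph{entire} sphere. Here, once $K\cap B(y,\rho)$ is deleted, paths can enter the ball through $\partial B(y,\rho)\setminus\widetilde K$ and leave it elsewhere. For a.e.\ $\rho$ the set $K\cap\partial B(y,\rho)$ has zero $\H^{N-1}$-measure, so it blocks nothing. The set $\partial B(y,\rho)\cap\overline{\omega}$ is dictated by the Korn approximation, which is blind to the topology of $\Omega\setminus K$.

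Here is a concrete failure for $N\geq 3$. Let $K$ contain a thin tube $\{\abs{x'}=s\}$ crossing $B(y,t)$ and capped off outside it, with $u$ equal to one and the same rigid motion inside and outside the tube. Then:
\begin{itemize}
\item the crack density in $B(y,t)$ is of order $(s/t)^{N-2}$, as small as you like;
\item one may take $\omega=\emptyset$ and $w=u$, so your wall has zero $\H^{N-1}$-measure;
\item the inside of the tube is reconnected to the outside through $B(y,\rho)$.
\end{itemize}
So your pair is not a topological competitor, and the quasiminimality inequality cannot be invoked.

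The missing idea is a wall that blocks \emph{every} new connection at cost $c\,\H^{N-1}(K\cap B(y,t))$ with $c$ small. A cost of order $C\,\H^{N-1}(K\cap B(y,t))$ is useless, because it enters with factor $M$ against $M^{-1}\H^{N-1}(K\cap B(y,\rho))$. The radial shadow used in the proof of Theorem \ref{thm_projection} is of this kind; the paper can afford it there only because Proposition \ref{prop_AR} is already available.

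One way to build such a wall is the following.
\begin{itemize}
\item By the relative isoperimetric inequality, if $\H^{N-1}(K\cap B(y,t))\leq\varepsilon t^{N-1}$ with $\varepsilon$ small, there is a component $U_0$ of $B(y,t)\setminus K$ with $\abs{B(y,t)\setminus U_0}\leq C\,\H^{N-1}(K\cap B(y,t))^{N/(N-1)}$.
\item By Fubini, choose $\rho\in(t/2,t)$ with $\H^{N-1}(\partial B(y,\rho)\setminus U_0)\leq C\varepsilon^{1/(N-1)}\H^{N-1}(K\cap B(y,t))$.
\item Add the compact set $\partial B(y,\rho)\setminus U_0$ to $\widetilde K$.
\end{itemize}
The first and last entry points of any path into the ball then lie in the connected set $U_0\subset B(y,t)\setminus K$. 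Separation is preserved by the concatenation argument at the end of the proof of Theorem \ref{thm_projection}.

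\paragraph{The decay lemma as written.} Two further steps do not hold.

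First, one comparison in $B(y,\rho)$ only bounds the total energy of $u$ there by that of the $\A$-harmonic replacement $v$, which is of the same order. It says nothing about $\int_{B(y,\tau t)}\abs{e(u)}^2$. Passing the $\tau^N$ decay from $v$ to $u$ requires $e(u)$ to be $L^2$-close to $e(v)$. The orthogonality argument fails across $K$ and on $\omega$, which is why the standard proofs (De Giorgi--Carriero--Leaci, Chambolle--Conti--Iurlano) argue by compactness at this point.

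Second, the iteration does not close with your bookkeeping.
\begin{itemize}
\item An additive error $C\varepsilon$ against the threshold $\varepsilon$ on the crack density cannot be made $\leq\varepsilon/2$ by shrinking $\varepsilon$.
\item $\psi$ contains the bulk term, which may be of order one at the starting scale. So $\psi(y,\tau t)\leq\frac12\psi(y,t)+Ch(t)$ does not keep the crack density below $\varepsilon$. The usual remedy is to iterate under smallness of the whole of $\psi$, reaching such a scale via the upper bound and one application of the lemma with a smaller $\tau$.
\item Every piece of added crack must be of size $c\,\H^{N-1}(K\cap B(y,t))$ with $c$ small. This can be arranged for the trace-mismatch set on $\partial B(y,\rho)$ by a Fubini choice of $\rho$, using $\abs{\omega}\lesssim\H^{N-1}(K\cap B(y,t))^{N/(N-1)}$. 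It fails for your ``thin neighbourhood of $\partial\omega$''. That neighbourhood is unnecessary anyway, since $u$ is replaced by the smooth $v$ inside the ball.
\end{itemize}
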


    Our next property is a Carleson estimate, which shows that at most scales and locations, the elastic energy is small compared to the surface energy.

    \begin{proposition}[Carleson estimate]\label{lem_carleson}
        There exist a constant $\varepsilon_0 > 0$ (depending on $N$, $M$, $\A$) and for all $p \in (1,2]$, there exists $C_p \geq 1$ (depending on $N$, $M$, $\A$, $p$) such that the following holds.
        Let $(u,K)$ be a topological quasiminimizer with any gauge $h$ in $\Omega$.
        For all $x \in K$ and $r > 0$ such that $B(x,2r) \subset \Omega$ and $h(2r) \leq \varepsilon_0$, we have
        \begin{equation}\label{eq_carleson_estimate}
            \int_{y \in K \cap B(x,r)} \int_0^r \omega_p(y,t) \frac{\dd{t}}{t} \dd{\H^{N-1}(y)} \leq C_p r^{N-1}.
        \end{equation}
    \end{proposition}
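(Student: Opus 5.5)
The statement to prove is the Carleson estimate (Proposition \ref{lem_carleson}). I will derive it from a decay estimate for $\omega_p$ and a standard stopping-time/summation argument in the spirit of David--Semmes, using Ahlfors-regularity (Proposition \ref{prop_AR}) throughout.

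The first and main step is a \emph{local decay lemma}. There exist $\tau \in (0,1/2)$ and $C \geq 1$, depending on $N$, $M$, $\A$, $p$, such that for every $y \in K$ and $t$ with $B(y,t) \subset \Omega$ and $h(t) \leq \varepsilon_0$,
\begin{equation*}
\int_{B(y,\tau t)\setminus K} \abs{e(u)}^p \dd{x} \leq C t^{N - (N-1)p/2}\,\bigl(\H^{N-1}(K\cap B(y,t))\, t^{1-N}\bigr)^{\gamma}
\end{equation*}
for some gain $\gamma>0$ in a form that can be summed. Concretely, I aim for the classical $L^p$ improvement for $p<2$: on $B \setminus K$ the function $e(u)$ is in $L^2$ with total mass $\lesssim t^{N-1}$, and most of that mass sits in a thin neighbourhood of $K$ of volume $\sim \delta t^{N}$. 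I would prove this via a Caccioppoli/ higher-integrability argument away from $K$. Cover $B(y,t)\setminus K$ by Whitney-type balls $B(z,d(z)/2)$ with $d(z)=\mathrm{dist}(z,K)$. Interior elliptic estimates for the Lamé system (since $u$ is a local minimizer of the elastic energy away from $K$ up to the gauge) give $\abs{e(u)(z)}^2 \lesssim d(z)^{-1}$ on average on such balls, by Ahlfors upper bound applied to a ball centred on $K$ of radius $\sim d(z)$. Then
\begin{equation*}
\int_{B(y,t)\setminus K}\abs{e(u)}^p \lesssim \int_{B(y,t)} d(x)^{-p/2}\dd x \lesssim t^{N-p/2},
\end{equation*}
since $\abs{\{d<s\}\cap B}\lesssim s\,t^{N-1}$ by Ahlfors regularity and $p/2<1$. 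This yields the uniform bound $\omega_p(y,t)\lesssim 1$, but not yet summability in $t$.

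For summability, I need the integrand to pay off only at scales where something happens. The plan is to show $\int_{B(y,t)}\abs{e(u)}^p$ is controlled by the portion of the Whitney decomposition at distance comparable to $t$ from $K$. Write $\omega_p(y,t)^{p/2}\lesssim t^{p/2-N}\sum_{k\ge0}\int_{\{2^{-k-1}t\le d<2^{-k}t\}\cap B(y,t)} d^{-p/2}$. Each dyadic layer at level $s=2^{-k}t$ contributes $\sim s^{1-p/2} t^{N-1}\cdot$(fraction of $K\cap B(y,t)$ "exposed" at scale $s$). Integrating $\dd t/t$ and $\dd\H^{N-1}(y)$ and interchanging by Fubini, each point $x$ with $d(x)=s$ is counted for $y\in K\cap B(x,t)$ with $t\gtrsim s$. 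The weight $(s/t)^{1-p/2}$ is geometrically summable in $t/s$, giving a total of $\lesssim \int_{B(x_0,2r)} d^{-p/2}\cdot d^{p/2}\,\cdots$. I would bookkeep so that the final bound becomes $\lesssim r^{1-N}\cdot r^{N-1}\cdot\sum_k (2^{-k})^{(1-p/2)}$ times Ahlfors constants, hence $\leq C_p r^{N-1}$.

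The main obstacle is the pointwise estimate $\fint_{B(z,d(z)/2)}\abs{e(u)}^2\lesssim d(z)^{-1}$. Quasiminimality only gives $\int_{B(z,\rho)}\abs{e(u)}^2\lesssim \rho^{N-1}$ (Proposition \ref{prop_AR}, via a ball centred at a nearest point of $K$). With the gauge $h$ merely bounded and no Euler--Lagrange equation available, there are two routes. The first is to avoid pointwise estimates and use only $L^2$ bounds on Whitney balls combined with Hölder per ball: $\int_{W}\abs{e(u)}^p\le \abs{W}^{1-p/2}(\int_W\abs{e(u)}^2)^{p/2}\lesssim d^{N(1-p/2)}d^{(N-1)p/2}$. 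This is exactly the $d^{N-p/2}$ scaling needed and requires only Ahlfors-regularity. I would pursue this first. Summing over Whitney cubes in $B(y,t)$ uses the fact that the number of Whitney cubes of size $s$ is $\lesssim (t/s)^{N-1}$, again by Ahlfors regularity. The factor $p<2$ is precisely what makes the dyadic sums, and then the Carleson integral, converge.
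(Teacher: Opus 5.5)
There is a genuine gap at the summation step, and that step is the entire content of the statement.

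\textbf{What your route actually proves.} You apply Hölder on each Whitney ball $W$ together with $\int_W \abs{e(u)}^2 \lesssim d_W^{N-1}$, i.e.\ the Ahlfors bound (\ref{eq_AF0}) used only at the Whitney scale. This gives $\omega_p(y,t) \leq C$ and nothing more, and no bookkeeping can upgrade it. Take the density $\mathrm{dist}(\cdot,K)^{-1}$ in place of $\abs{e(u)}^2$, with $K$ a hyperplane. It satisfies every per-Whitney-ball bound you use. Yet the corresponding $\omega_p(y,t)$ is $\approx 1$ for all $y \in K$ and all $t$, so $\int_0^r \omega_p(y,t) \frac{\dd{t}}{t} = +\infty$.

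The same failure shows in your sketch. Once the energy is replaced by a function of $\mathrm{dist}(\cdot,K)$, exchanging the integrals leads to $\int_{B(x,2r)} \mathrm{dist}(z,K)^{-1} \dd{z}$, which is infinite for flat $K$. Your final count $r^{1-N} \cdot r^{N-1} \cdot \sum_k 2^{-k(1-p/2)}$ is not even of order $r^{N-1}$.

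\textbf{What $p<2$ does and does not give.} Your claim that $p<2$ makes ``the Carleson integral converge'' conflates two things. The condition $p<2$ makes the sum over distance layers \emph{inside one ball} converge. The $\frac{\dd{t}}{t}$-integral converges only because of the \emph{global} bound $\int_{B(x,2r)\setminus K} \abs{e(u)}^2 \leq C r^{N-1}$; this is where $h(2r) \leq \varepsilon_0$ enters. That bound can only be exploited if the actual energy density stays in the integrand until after the order of integration is exchanged. Relatedly, your layer decomposition bounds $\omega_p^{p/2}$, which is nonlinear in the layer energies, whereas Fubini needs a bound linear in $\abs{e(u)}^2$.

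\textbf{The missing step.} It is the weighted Hölder inequality behind \cite[Theorem 23.8]{DavidBOOK}, which the paper invokes. With $\delta = \mathrm{dist}(\cdot,K)$ and $0 < \alpha < (2-p)/p$,
\begin{equation*}
\int_{B(y,t)\setminus K} \abs{e(u)}^p \leq \biggl(\int_{B(y,t)\setminus K} \abs{e(u)}^2 \delta^{\alpha}\biggr)^{p/2} \biggl(\int_{B(y,t)} \delta^{-\alpha p/(2-p)}\biggr)^{1-p/2}.
\end{equation*}
Ahlfors-regularity gives $\int_{B(y,t)} \delta^{-\beta} \leq C t^{N-\beta}$ for $\beta < 1$; this is your layer count. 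Hence $\omega_p(y,t) \leq C t^{1-N-\alpha} \int_{B(y,t)\setminus K} \abs{e(u)}^2 \delta^{\alpha}$, which is now linear in the energy.

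After Fubini, a point $z$ is only seen by pairs $(y,t)$ with $t > \delta(z)$. The bound $\int_{\delta(z)}^{r} t^{1-N-\alpha}\, \H^{N-1}\bigl(K \cap B(x,r) \cap B(z,t)\bigr) \frac{\dd{t}}{t} \leq C \alpha^{-1} \delta(z)^{-\alpha}$ cancels the weight. What remains is $C \int_{B(x,2r)\setminus K} \abs{e(u)}^2 \leq C r^{N-1}$.

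\textbf{The same fix in your language.} Keep the layer energies $E_k(y,t) = \int_{\{2^{-k-1}t \leq \delta < 2^{-k}t\} \cap B(y,t)} \abs{e(u)}^2$. Apply Hölder in $k$ with weights $2^{-k(1-p/2)}$ to get $\omega_p(y,t) \lesssim \sum_k 2^{-k(1-p/2)}\, t^{1-N} E_k(y,t)$. Only then use Fubini and (\ref{eq_AF0}) on $B(x,2r)$.

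No local decay lemma or elliptic estimate is needed. Note that this argument, like yours, requires $p < 2$.
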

    \begin{proof}
        The argument for (\ref{eq_carleson_estimate}) is the same as in Theorem 23.8 in \cite{DavidBOOK}, with $\abs{\nabla u}^2$ replaced by $\abs{e(u)}^2$.
        It only uses the fact that, if $h(2r) \leq \varepsilon_0$ with $\varepsilon_0$ smaller than in Proposition \ref{prop_AR}, the energy estimates (\ref{eq_AF0}) and (\ref{eq_AF1}) hold in every ball $B \subset B(x,2r)$ centred on $K$.
    \end{proof}

    \begin{corollary}\label{cor_carleson}
        There exist a constant $\varepsilon_0 > 0$ (depending on $N$, $M$, $\A$) and for all $p \in (1,2]$ and $\varepsilon > 0$, there exists $C(\varepsilon) \geq 1$ (depending on $N$, $M$, $\A$, $p$, $\varepsilon$) such that the following holds.
        Let $(u,K)$ be a topological quasiminimizer with any gauge $h$ in $\Omega$.
        For all $x \in K$ and $r > 0$ with $B(x,r) \subset \Omega$ and $h(r) \leq \varepsilon_0$, there exist $y \in B(x,r/2)$ and $t \in [C(\varepsilon)^{-1}r,r/2]$ such that $$\omega_p(y,t) \leq \varepsilon.$$
    \end{corollary}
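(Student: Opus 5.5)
The plan is to derive the corollary from the Carleson estimate (Proposition \ref{lem_carleson}) by a pigeonhole argument on the region $K \cap B(x,r/4)$ times a logarithmic range of scales. First rescale (Remark \ref{rmk_scaling}) so that $r = 1$. Since the Carleson estimate asks for $B(x,2\rho) \subset \Omega$ and $h(2\rho) \leq \varepsilon_0$, we apply it at the radius $\rho = 1/4$. Then $B(x,1/2) \subset B(x,1) \subset \Omega$, and $h(1/2) \leq h(1) \leq \varepsilon_0$ because $h$ is nondecreasing. Taking $\varepsilon_0$ smaller than in Propositions \ref{prop_AR} and \ref{lem_carleson}, this gives
\begin{equation*}
    \int_{y \in K \cap B(x,1/4)} \int_0^{1/4} \omega_p(y,t) \frac{\dd{t}}{t} \dd{\H^{N-1}(y)} \leq C_p.
\end{equation*}

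Next fix a small parameter $\delta \in (0,1/4)$, to be chosen depending on $\varepsilon$, and restrict the $t$-integral to $[\delta, 1/4]$. Suppose, for contradiction, that $\omega_p(y,t) > \varepsilon$ for every $y \in K \cap B(x,1/4)$ and every $t \in [\delta,1/4]$. The lower Ahlfors bound (\ref{eq_AF1}) gives $\H^{N-1}(K \cap B(x,1/4)) \geq C^{-1} 4^{1-N}$. Combining this with the restricted integral, the left-hand side above is at least
\begin{equation*}
    C^{-1} 4^{1-N} \, \varepsilon \, \log\bigl(1/(4\delta)\bigr).
\end{equation*}
Choosing $\delta$ so small that this quantity exceeds $C_p$ gives a contradiction. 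So there exist $y \in K \cap B(x,1/4) \subset B(x,1/2)$ and $t \in [\delta,1/4] \subset [\delta,1/2]$ with $\omega_p(y,t) \leq \varepsilon$. One should note that $\omega_p(y,\cdot)$ is only measurable, not continuous, but the pigeonhole step needs only measurability of the integrand, so this causes no difficulty.

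Setting $C(\varepsilon) := \delta^{-1}$, which depends only on $N, M, \A, p, \varepsilon$ through $C$ and $C_p$, and scaling back, we get $t \in [C(\varepsilon)^{-1} r, r/2]$, as claimed. There is no real obstacle here. The only points needing care are the bookkeeping of radii, so that the hypothesis $B(x,r) \subset \Omega$, $h(r) \leq \varepsilon_0$ is enough to apply the Carleson estimate at a comparable scale, and the use of Ahlfors regularity to make sure $K \cap B(x,r/4)$ carries enough measure for the pigeonhole argument.
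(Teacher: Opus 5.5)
Your proof is correct and follows the paper's argument: apply the Carleson estimate (\ref{eq_carleson_estimate}) in a ball centred at $x$, and restrict the scale integral to $[\delta,\cdot]$. The lower Ahlfors bound (\ref{eq_AF1}) then gives a contradiction once $\varepsilon\log(1/\delta)$ is large. The only cosmetic difference is that the paper applies the Carleson estimate at radius $r/2$, which uses the hypotheses $B(x,r)\subset\Omega$ and $h(r)\le\varepsilon_0$ exactly, whereas you use $r/4$; this works equally well.
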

    \begin{proof}
        This corollary is standard; we reproduce the proof for completeness.
        Fix $\varepsilon_0$ smaller than in Proposition \ref{prop_AR} and Lemma \ref{lem_carleson}.
        Let $x \in K$ and $r > 0$ be such that $B(x,r) \subset \Omega$ and $h(r) \leq \varepsilon_0$.
        Assume that there exists some constant $C_0 \geq 2$ such that $\omega_p(y,t) > \varepsilon$ for every $y \in B(x,r/2)$ and every $t \in [C_0^{-1}r,r/2]$.
        By (\ref{eq_carleson_estimate}), we have
        \begin{equation*}
            \int_{y \in K \cap B(x,r/2)} \int_0^{r/2} \omega_p(y,t) \frac{\dd{t}}{t} \dd{\H^{N-1}(y)} \leq C_p r^{N-1}.
        \end{equation*}
        On the other hand, by assumption on $C_0$,
        \begin{align*}
            \int_{y \in K \cap B(x,r/2)} \int_0^{r/2} \omega_p(y,t) \frac{\dd{t}}{t} \dd{\H^{N-1}(y)} &\geq \int_{y \in K \cap B(x,r/2)} \int_{C_0^{-1}r}^{r/2} \omega_p(y,t) \frac{\dd{t}}{t} \dd{\H^{N-1}(y)}\\
                                                                                                      &\geq \varepsilon \H^{N-1}(K \cap B(x,r/2)) \ln(C_0/2),
        \end{align*}
        which yields an upper bound on $C_0$.
        Taking $C_0$ large enough therefore ensures the existence of pair $(y,t)$ with $\omega_p(y,t) \leq \varepsilon$.
    \end{proof}

    \section{The property of projections}

    \begin{theorem}[Property of projections]\label{thm_projection}
        There exists constants $\varepsilon_0 > 0$ and $C \geq 1$ (depending on $N$, $M$, $\A$) such that the following holds.
        Let $(u,K)$ be a Griffith topological quasiminimizer in $\Omega$.
        For all $x \in K$ and all $r > 0$ such that $B(x,r) \subset \Omega$ and $h(r) \leq \varepsilon_0$, we have
        \begin{equation*}
            \int_{G(N-1,N)} \H^{N-1}\bigl(p_V(K \cap B(x,r))\bigr) \dd{V} \geq C^{-1} r^{N-1}.
        \end{equation*}
    \end{theorem}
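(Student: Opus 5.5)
We argue by contradiction combined with a competitor construction, following the hint in the introduction (a Federer--Fleming type projection estimate producing many rays that avoid the crack). By scaling (Remark \ref{rmk_scaling}) we may assume $x = 0$ and $r = 1$. Suppose that
\begin{equation*}
    \int_{G(N-1,N)} \H^{N-1}\bigl(p_V(K \cap B(0,1))\bigr) \dd{V} \leq \eta
\end{equation*}
for some small $\eta$ to be chosen. We will contradict the lower Ahlfors bound (\ref{eq_AF1}).

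\textbf{Step 1: good directions and rays.} By Chebyshev, the set of hyperplanes $V$ with $\H^{N-1}(p_V(K\cap B(0,1))) \leq \sqrt\eta$ has measure at least $1-\sqrt\eta$. For such a $V$, with unit normal $e$, almost every line parallel to $e$ through $B(0,1/2)$ misses $K \cap B(0,1)$, except for a set of lines of $\H^{N-1}$-measure $\leq \sqrt\eta$. Equivalently, after a radial (Federer--Fleming style) averaging over centres $z \in B(0,1/4)$, for most $z$ and most directions $\theta \in \mathbb{S}^{N-1}$ the segment from $z$ to $\partial B(0,\rho)$ does not meet $K$. Here $\rho \in (1/2,1)$ is also chosen generically.

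\textbf{Step 2: controlling the directional variation.} Along any segment $\sigma$ avoiding $K$, with direction $\theta$, we have
\begin{equation*}
    \frac{\dd}{\dd s}\bigl(u\cdot\theta\bigr)(z+s\theta) = e(u)\theta\cdot\theta .
\end{equation*}
Hence the directional variation of $u\cdot\theta$ is controlled by $\int_\sigma \abs{e(u)}$. Integrating over the good rays, and using (\ref{eq_AF0}) together with the Fubini-type bound $\int \int_{\sigma} \abs{e(u)} \lesssim \int_{B} \abs{e(u)}\abs{y-z}^{1-N}$, we get that the family of quantities $u\cdot\theta$ on good rays is controlled in $L^1$. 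This is the kind of information a Korn-type reconstruction of $u$ up to a rigid motion needs.

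\textbf{Step 3: the competitor.} Take $F := (K \setminus B(0,\rho)) \cup \bigl(\partial B(0,\rho) \cap \overline{p(K\cap B)}\bigr)$, roughly: inside $B(0,\rho)$ remove $K$, keeping only the trace of the shadow of $K$ on the sphere seen from a good centre $z$. This trace has measure $\lesssim \sqrt\eta$ by Step 1. Topological admissibility holds because the added sphere piece, together with $K$ outside, still separates whatever $K$ separated: any path inside $B(0,\rho)$ can be pushed to the sphere.

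For the function $v$, we extend $u|_{\partial B(0,\rho)\setminus F}$ inside the ball. The most robust choice, avoiding Korn on a cracked sphere, is the following: define $v$ on $\partial B(0,\rho)\setminus F$ as the boundary data, then use a cutoff between $u$ and a harmonic-type extension. Its elastic energy in $B(0,\rho)$ would be bounded using Step 2, giving control of $u$ along good rays modulo rigid motions, together with an $H^{1/2}$-type estimate on the sphere. Equivalently, since $v$ may jump on $\partial B(0,\rho)$ where $F$ lives, one can simply set $v = a$ (a rigid motion) inside and pay only on $\partial B(0,\rho)\setminus F$. That is not allowed without a jump, so instead we use a thin annulus interpolation whose energy is $\lesssim \omega + \sqrt\eta$.

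\textbf{Step 4: conclusion.} Quasiminimality then yields
\begin{equation*}
    M^{-1}\H^{N-1}(K\cap B(0,\rho)) \leq C\bigl(\sqrt\eta + \text{annulus energy}\bigr) + \varepsilon_0 .
\end{equation*}
If the annulus energy can be made small by choosing $\rho$ and using (\ref{eq_AF0}), this contradicts (\ref{eq_AF1}). If it cannot, we first localize to a ball where $\omega_p$ is small via Corollary \ref{cor_carleson}, at a cost of only a constant factor in radius, which is harmless for the projection bound.

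\textbf{Main obstacle.} The main obstacle is Step 3: building an extension with controlled symmetric-gradient energy using only information along rays. There is no coarea formula, and Korn's inequality fails on domains cut by $K$. We would first try to deduce from Step 2 that $u$ is $L^1$-close on the good rays to a single rigid motion $a$, via the directional-difference characterization of $BD$ (Ambrosio--Coscia--Dal Maso style). We would then glue $a$ to $u$ on a generic annulus where $\omega_p$ is small, using the good rays to avoid $K$.
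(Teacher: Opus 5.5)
\textbf{There is a genuine gap.} Your outline has the paper's architecture. You localize to a ball with small $\omega_p$ via Corollary \ref{cor_carleson}, use the shadow estimate (\ref{eq_FF}) and the ray bound (\ref{eq_S}), integrate $e(u)\theta\cdot\theta$ along segments missing $K$, add the shadow of $K$ on a sphere to keep the competitor topological, and contradict (\ref{eq_AF1}). But the step you flag as the main obstacle is where the proof actually lives, and your claim that a thin-annulus interpolation costs $\lesssim \omega+\sqrt{\eta}$ is unsupported.

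Segments from a single centre $z$ only control the radial increment $(u(x)-u(z))\cdot(x-z)$, and you have no anchor for $u(z)$. Slicing characterizations of $BD$ do not repair this: a small but nonempty family of lines still crosses $K$ with uncontrolled jumps, and $\H^{N-1}(K)$ itself is not small, only its projections.

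The paper's missing ingredient is the following.
\begin{itemize}
\item By (\ref{eq_DS}) there is a ball $B(x_0,2r_0)$ at definite distance from $K$. Korn--Poincar\'e there lets you subtract a rigid motion so that $\int_{B(x_0,2r_0)}\abs{u}^p\le C\int_{B(0,1)}\abs{e(u)}^p$.
\item One picks $2N$ centres $x_k\in B(x_0+r_0e_k,r_0/(2N))$, with $e_{N+k}=-e_k$. At each, $\abs{u(x_k)}$ is small, the ray integrals $\int_{[x_k,z]}\abs{e(u)}^p$ are small for most $z\in\partial B(0,1)$, and the shadow of $K\cap B(0,1)$ seen from $x_k$ has measure $\lesssim\varepsilon_1$.
\item The elementary inequality (\ref{eq_observation}), $\sup_k\abs{y\cdot(x-x_k)}\ge\frac{r_0}{2N}\abs{y}$, converts the $2N$ radial increments into the full bound $\abs{u(x)}\le C(\varepsilon_1)\varepsilon_0$ off a set $R$ with $\abs{R}\le C\varepsilon_1$.
\item Korn--Poincar\'e on balls of radius $\sim\delta$ disjoint from $K$, combined with Lemma \ref{lem_rigid} (a rigid motion small on half a ball is small on the whole ball), upgrades this to smallness of $\int\abs{u}^p$ on $\{x\in D:\mathrm{dist}(x,K)\ge\delta\}$.
\end{itemize}

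Even granting such control, your competitor $F=(K\setminus B(0,\rho))\cup(\text{shadow})$ has a second problem. It forces the new function to agree with $u$ on $\partial B(0,\rho)\setminus F$ arbitrarily close to $K$, where neither Korn nor any $H^{1/2}$ bound on the trace is available. The paper handles this as follows.
\begin{itemize}
\item Set $v=\varphi u$ with a cutoff $\varphi$ vanishing on the $\delta$-neighbourhood of $K$, so $v\in W^{1,p}(D)$ with $\int_D\abs{e(v)}^p$ small.
\item Korn on the annulus $D$ gives a rigid $a$ with $\nabla v-\nabla a$ small in $L^p$, and a good sphere $\partial B(0,\rho)$ is selected.
\item $v-a$ is extended inside with small Dirichlet energy by \cite[Lemma 22.32]{DavidBOOK}; this is where $p>2(N-1)/N$ enters.
\end{itemize}
The price is an extra crack piece $Z=\{x\in\partial B(0,\rho):\mathrm{dist}(x,K)\le 2\delta\}$, with $\H^{N-1}(Z)\le C\delta$ for a good $\rho$ by (\ref{eq_DS}). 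The competitor crack is then $(K\setminus B(0,\rho))\cup Z\cup\Sigma$, with $\Sigma$ the shadow.

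Finally, localization to small $\omega_p$ is not an optional fallback as in your Step 4. (\ref{eq_AF0}) only bounds the energy; it is the smallness of $\int\abs{e(u)}^p$ that makes all the ray integrals, and hence everything above, small.
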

    \begin{proof}
        Choose an exponent $p \in (2(N-1)/N,2)$, for instance $p = (2N-1)/N$.
        To prove Theorem \ref{thm_projection}, it suffices to show that there exists constants $\varepsilon_1 > 0$ and $C \geq 1$ (depending on $N$, $M$, $\A$) such that for all $x \in K$ and $r > 0$ with $B(x,r) \subset \Omega$, $h(r) \leq \varepsilon_1$ and $\omega_p(x,r) \leq \varepsilon_1$, we have
        \begin{equation}\label{eq_projection_estimate}
            \int_{G(N-1,N)} \H^{N-1}\bigl(p_V(K \cap B(x,r))\bigr) \dd{V} \geq C^{-1} r^{N-1}.
        \end{equation}
        Indeed, assume that such constants $\varepsilon_1$ and $C$ have been fixed, and let $\varepsilon_0$ be the constant of Corollary \ref{cor_carleson}.
        Let any $x \in K$ and $r > 0$ be such that $B(x,r) \subset \Omega$ and $h(r) \leq \min(\varepsilon_1,\varepsilon_0)$.
        By Corollary \ref{cor_carleson}, there exists a point $y \in K \cap B(x,r/2)$ and a radius $t$ with $C(\varepsilon_1)^{-1} r \leq t \leq r/2$ and $\omega_p(y,t) \leq \varepsilon_1$, where $C(\varepsilon_1) \geq 2$ depends on $N$, $M$, $\A$, $\varepsilon_1$.
        By definition of $\varepsilon_1$, the estimate (\ref{eq_projection_estimate}) then holds in the ball $B(y,t)$:
        \begin{equation*}
            \int_{G(N-1,N)} \H^{N-1}\bigl(p_V(K \cap B(y,t))\bigr) \dd{V} \geq C^{-1} t^{N-1}.
        \end{equation*}
        Since $t \geq C(\varepsilon_1)^{-1} r$ and $K \cap B(y,t) \subset K \cap B(x,r)$, this yields the property of projection in $B(x,r)$, with a larger constant $C$.

        We now show that (\ref{eq_projection_estimate}) holds when $\omega_p$ is sufficiently small.
        After rescaling to the unit ball, we argue by contradiction and assume that there exists $\varepsilon_0, \varepsilon_1 \in (0,1)$ and a Griffith minimizer $(u,K)$ in $B(0,1)$ such that
        \begin{equation}\label{eq_HU}
            \biggl(\int_{B(0,1)} \abs{e(u)}^p \dd{x}\biggr)^{1/p} \leq \varepsilon_0, \quad h(1) \leq \varepsilon_0 
        \end{equation}
        and
        \begin{equation}\label{eq_HK}
            \int_{G(N-1,N)} \H^{N-1}\bigl(p_V(K \cap B(0,1))\bigr) \dd{V} \leq \varepsilon_1.
        \end{equation}
        We shall obtain a contradiction for a suitable choice of $\varepsilon_0 ,\varepsilon_1$ which depends only on $N$, $M$, $\A$ (we can already assume $\varepsilon_0$ to be less than in Proposition \ref{prop_AR}).
        Throughout the proof, $C$ denotes a generic constant $\geq 1$ which depends only on $N$, $M$, $\A$.

        By Ahlfors-regularity of $K$, it is standard (see \cite[Lemma 23.25]{DavidBOOK}) that
        \begin{equation}\label{eq_DS}
            \abs{\set{x \in B(0,1/2) | \mathrm{dist}(x,K) \leq \rho}} \leq C \rho \quad \forall \rho > 0.
        \end{equation}
        Choose a radius $r_0 \in (0,1/10)$ (depending on $N$, $M$, $\A$) so small that, for $\rho = 2 r_0$, the left-hand side has a volume strictly less than $\abs{B(0,1/100)}$.
        Hence, there exists a point $x_0 \in B(0,1/100)$ with $\mathrm{dist}(x,K) \geq 2 r_0$.
        Note in particular that $B(x_0,2r_0) \subset B(0,1/10)$.

        We also shall use the following observation.
        Let $(e_1,\ldots,e_{2N})$ denotes the family of vectors such that $(e_1,\ldots,e_N)$ is the canonical basis of $\R^N$ and $(e_{N+1},\ldots,e_{2N}) = -(e_1,\ldots,e_N)$.
        The family of points $(x_0 + r_0 e_k)_k$ is being well distributed along $\partial B(x_0,r_0)$ in the following sense.
        For every family of points $x_1,\ldots,x_{2N} \in \R^N$ with $x_k \in B(x_0 + r_0 e_k, r_1)$, where $r_1 = r_0/(2N)$, we have
        \begin{equation}\label{eq_observation}
            \sup_{k} \abs{y \cdot (x - x_k)} \geq \frac{r_0}{2N} \abs{y} \quad \forall x,y \in \R^N.
        \end{equation}
        Indeed, for every $k$,
        \begin{equation*}
            \abs{y \cdot (x - x_k)} \geq \abs{y \cdot (x - x_0 - r_0 e_k)} - \frac{r_0}{2N} \abs{y}
        \end{equation*}
        and
        \begin{align*}
            \sup_{k} \abs{y \cdot (x - x_0 - r_0 e_k)} &\geq \frac{1}{2N}\sum_{k=1}^{2N} \abs{y \cdot (x - x_0 - r_0 e_k)}\\
                                                       &\geq \frac{1}{2N} \sum_{k=1}^{N} \abs{y \cdot (x - x_0 - r_0 e_k)} + \abs{y \cdot (x - x_0 + r_0 e_k)}\\
                                                       &\geq \frac{1}{2N} \sum_{k=1}^N \abs{y \cdot (x - x_0 - r_0 e_k) - y \cdot (x - x_0 + r_0 e_k)} = \frac{r_0}{N} \abs{y}.
        \end{align*}

        We now record three estimates that hold in average.
        Our first one is the Korn-Poincaré inequality applied to $u \in W^{1,2}_{\mathrm{loc}}(B(x_0,2 r_0);\R^N)$: there exists a universal constant $C_0 \geq 1$ and rigid motion $a$ such that
        \begin{equation*}
            \int_{B(x_0,2r_0)} \abs{u - a}^p \dd{x} \leq C_0 r_0^p \int_{B(x_0,2r_0)} \abs{e(u)}^p \dd{x}.
        \end{equation*}
        We may assume without loss of generality that $a = 0$ since a (quasi-)minimizer remains (quasi-)minimizer after substracting a rigid motion.
        We will only remember that
        \begin{equation}\label{eq_Korn}
            \int_{B(x_0,2r_0)} \abs{u}^p \dd{x} \leq C_0 \int_{B(0,1)} \abs{e(u)}^p \dd{x}.
        \end{equation}
        (Because of the gauge $h$ in the definition of quasiminimizers, we cannot use elliptic regularity to control the $L^{\infty}$ norm of $u$ in $B(x_0,r_0)$.)

        Next, we claim that, up to considering a bigger universal constant $C_0$,
        \begin{equation}\label{eq_S}
            \int_{B(0,1/2)} \vartheta(x) \dd{x} \leq C_0 \int_{B(0,1)} \abs{e(u)}^p \dd{x},
        \end{equation}
        where, for $x \in B(0,1/2)$,
        \begin{equation*}
            \vartheta(x) := \int_{z \in \partial B(0,1)} \int_{[x,z]} \abs{e(u)}^p \dd{\H^1(s)} \dd{\H^{N-1}(z)}.
        \end{equation*}
        For $z \in \partial B(0,1)$, we have
        \begin{equation*}
            \int_{t \in [x,z]} \abs{e(u)}^p \dd{\H^1(t)} = \int_0^{+\infty} \abs{e(u)(x + t (z -x))}^p \mathbf{1}_{\set{\abs{x + t(z-x)} < 1}}(z) \dd{\H^1(t)},
        \end{equation*}
        so by Fubini, the change of variable $y = x + t (z - x)$, and the co-area formula in polar coordinates,
        \begin{align*}
            \vartheta(x)
        &\leq C \int_0^{+\infty} \frac{1}{t^{N-1}} \int_{\partial B(x,t) \cap B(0,1)} \abs{e(u)}^p \dd{\H^{N-1}(y)} \dd{t}\\
        &\leq C \int_{B(0,1)} \frac{\abs{e(u)}^p}{\abs{z - x}^{N-1}} \dd{z}.
        \end{align*}
        Applying Fubini once more
        \begin{equation*}
            \int_{B(0,1/2)} \vartheta(x) \dd{x} \leq C \int_{B(0,1)} \abs{e(u)}^p \biggl(\int_{B(0,1/2)} \frac{\dd{x}}{\abs{z - x}^{N-1}}\biggr) \dd{z}
        \end{equation*}
        and the result follows by integrability of $x \mapsto \abs{x}^{1-N}$ near the origin.

        We now turn to our last estimate. We claim that up to increasing the universal constant $C_0$, we have
        \begin{equation}\label{eq_FF}
            \int_{B(0,1/2)} \H^{N-1}\bigl(\Sigma(x)\bigr) \dd{x} \leq C_0 \int_{G(N-1,N)} \H^{N-1}\bigl(p_V(K \cap B(0,1)\bigr) \dd{V},
        \end{equation}
        where, for $x \in B(0,1/2)$,
        \begin{equation*}
            \Sigma(x) := \set{z \in \partial B(0,1) | [x,z] \cap K \ne \emptyset}. 
        \end{equation*}
        For $d = 1,\ldots,N-1$, let $\gamma_{N,d}$ denote the canonical measure of $G(N,d)$ (see \cite[Chapter 3, \S 3.9]{Mattila});
        we will be only use $\gamma_{1,N}$ and $\gamma_{N-1,N}$.
        We recall that for all Borel set $A \subset G(1,N)$,
        \begin{equation*}
            \gamma_{1,N}(A) = \gamma_{N-1,N}(\set{V^\perp | V \in A}),
        \end{equation*}
        and that for all Borel set $S \subset \partial B(0,1)$, we have
        \begin{equation*}
            \H^{N-1}(S) \leq C \gamma_{1,N}\bigl(\set{L | L \cap S \ne \emptyset}\bigr).
        \end{equation*}
        The same estimate remains valid if we slightly shift the centers: for $x \in B(0,1/2)$ and for all Borel set $S \subset \partial B(0,1)$,
        \begin{equation}\label{eq_SG2}
            \H^{N-1}(S) \leq C \gamma_{1,N}\bigl(\set{L | (x + L) \cap S \ne \emptyset}\bigr).
        \end{equation}
        The point is that $S$ is biLipschitz equivalent to its radial projection of center $x_0$ onto $\partial B(x_0,1)$, whereas this radial projection does not affect the right-hand side of (\ref{eq_SG2}).
        Applying (\ref{eq_SG2}) with $S = \Sigma(x)$, we obtain that for all $x \in B(0,1/2)$,
        \begin{equation*}
            \H^{N-1}(\Sigma(x)) \leq C \gamma_{1,N}\bigl(\set{L | \left(L + x\right) \cap K \cap B(0,1) \ne \emptyset}\bigr).
        \end{equation*}
        By Fubini's theorem,
        \begin{align*}
            \int_{B(0,1/2)} \H^{N-1}(\Sigma(x)) \dd{x} &\leq C \int_{G(1,N)} \abs{\set{x \in B(0,1/2) | (x + L) \cap K \cap B(0,1)\ne \emptyset}} \dd{L}\\
                                                       &\leq C \int_{G(N-1,N)} \abs{\set{x \in B(0,1/2) | (x + V^\perp) \cap K \cap B(0,1) \ne \emptyset}} \dd{V}\\
                                                       &\leq C \int_{G(N-1,N)} \H^{N-1}\bigl(p_V(K \cap B(0,1))\bigr) \dd{V},
        \end{align*}
        which is exactly (\ref{eq_FF}).

        Now, we show that these three estimates hold pointwise at most points.
        Let $C_1 > 0$ (to be fixed shortly), and let $A$ be the set of points $x \in B(x_0,2r_0)$ such that
        \begin{equation}\label{eq_badKorn}
            \abs{u(x)}^p > C_1 \int_{B(0,1)} \abs{e(u)}^p \dd{x}.
        \end{equation}
        If $\int_{B(0,1)} \abs{e(u)} \dd{x} > 0$, we use (\ref{eq_Korn}) and Chebyshev's inequality,
        \begin{equation*}
            \abs{A} \biggl(C_1 \int_{B(0,1)} \abs{e(u)}^p \dd{x}\biggr) \leq \int_{B(x_0,2r_0)} \abs{u(x)}^p \dd{x} \leq C_0 \int_{B(0,1)} \abs{e(u)}^p \dd{x},
        \end{equation*}
        so $\abs{A} \leq C_0 C_1^{-1}$.
        If $\int_{B(0,1)} \abs{e(u)}^p \dd{x} = 0$, then (\ref{eq_Korn}) shows that $\abs{u} = 0$ a.e. so that we have $\abs{A} = 0$ anyways.
        Similarly, the set of points $x \in B(0,1/10)$ such that 
        \begin{equation}\label{eq_badS}
            \vartheta(x) > C_1 \int_{B(0,1)} \abs{e(u)}^p \dd{x}
        \end{equation}
        has measure at most $C_1^{-1} C_0$ by (\ref{eq_S}), and the set of point of points $x \in B(0,1/10)$ such that
        \begin{equation}\label{eq_badFF}
            \int_{B(0,1/2)} \H^{N-1}(\Sigma(x)) > C_1 \int_{G(N-1,N)} \H^{N-1}\bigl(p_V(K \cap B(0,1)\bigr) \dd{V},
        \end{equation}
        also has measure at most $C_1^{-1} C_0$.
        We may therefore choose $C_1$ so large (depending on $N$, $M$, $\A$) that the bad set where either (\ref{eq_badKorn}), (\ref{eq_badS}) or (\ref{eq_badFF}) holds has a strictly smaller volume than any ball of the form $B(x_0 + r_0 e_k, r_1)$, where we recall $r_1 = r_0/(2N)$.
        Consequently, for each $k = 1,\ldots,2N$, we can find a point $x_k \in B(x_0 + r_0 e_k, r_1)$ such that
        \begin{equation}\label{eq_xKorn}
            \abs{u(x_k)}^p \leq C_1 \int_{B(0,1)} \abs{e(u)}^p \dd{x} , 
        \end{equation}
        \begin{equation}\label{eq_xS}
            \vartheta(x_k)
            \leq C_1 \int_{B(0,1)} \abs{e(u)}^p \dd{x},
        \end{equation}
        and
        \begin{equation}\label{eq_xFF}
            \H^{N-1}(\Sigma(x_k)) \dd{x} \geq C_1 \int_{G(N-1,N)} \H^{N-1}\bigl(p_V(K \cap B(0,1)\bigr) \dd{V}.
        \end{equation}
        From this point on, the constant $C_1$ is fixed and universal, so rename it $C$ as usual.

        Given a constant $C_2 > 0$, we define for each $k$, the set $\Sigma'(x_k)$ of points $z \in \partial B(0,1)$ such that
        \begin{equation*}
            \int_{[x_k,z]} \abs{e(u)}^p \dd{x} > C_2 \int_{B(0,1)} \abs{e(u)}^p \dd{x}.
        \end{equation*}
        By (\ref{eq_xS}) and a now usual Chebyshev argument, we obtain $\H^{N-1}(\Sigma'(x_k)) \leq C C_2^{-1}$.
        We now choose $C_2 = C(\varepsilon_1)$ so large that $\H^{N-1}(\Sigma'(x_k)) \leq \varepsilon_1$.
        In the rest of the proof, $C(\varepsilon_1)$ for a generic constant $\geq 1$ depending on $N$, $M$, $\A$ and $\varepsilon_1$.
        In particular, for each $k$ and every $z \in \partial B(0,1) \setminus \Sigma'(x_k)$,
        \begin{equation}\label{eq_C2}
            \int_{[x_k,z]} \abs{e(u)}^p \dd{x} \leq C(\varepsilon_1) \int_{B(0,1)} \abs{e(u)}^p \dd{x}.
        \end{equation}

        We finally introduce for $k = 1,\ldots,N$, the union of (bad) rays
        \begin{equation*}
            R_k = \set{tx | x \in \Sigma(x_k) \cup \Sigma'(x_k),\ t \in [0,1]}
        \end{equation*}
        and set $R = \bigcup_{k=1}^N R_k$.
        By (\ref{eq_xFF}) and (\ref{eq_HK}), we have $\H^{N-1}(\Sigma(x_k)) \leq C \varepsilon_1$, and by the previous paragraph $\H^{N-1}(\Sigma'(x_k) \leq \varepsilon_1$.
        It follows that $\abs{R} \leq C \varepsilon_1$.
        This bound means that for most points $x \in B(0,1)$, the segment $[x_k,x]$ does not meet $K$ and moreover
        \begin{equation*}
            \int_{[x_k,x]} \abs{e(u)}^p \dd{\H^1} \leq C(\varepsilon_1) \int_{B(0,1)} \abs{e(u)}^p \dd{x}
        \end{equation*}
        This property will play a key role in controlling the elastic energy of our competitor, which we construct in the next step.

        Let $D$ denote the annulus
        \begin{equation*}
            D = \set{x \in \R^N | 1/4 < \abs{x} < 1/2}.
        \end{equation*}
        We claim that for $\delta \in (0,1/10)$,
        \begin{equation}\label{eq_controlu}
            \int_{\set{x \in D | \mathrm{d}(x,K) \geq \delta}} \abs{u}^p \leq C \delta^{p-N} C(\varepsilon_1) \varepsilon_0^p,
        \end{equation}
        where $C(\delta)$ is a constant depending on $N$ and $\delta$.
        Choose a maximal family of points $y_i \in D$ such that $\mathrm{dist}(y_i,K) \geq \delta$ and $\abs{y_i - y_j} \geq \delta/10$.
        Then the balls $B_i = B(y_i,\delta/100)$ are pairwise disjoint, and since they are contained in $B(0,1)$, their number is at most $C \delta^{-N}$.
        Moreover, $D$ is covered by the balls $B(y_i,\delta/10)$, so to prove (\ref{eq_controlu}), it suffices to show that for all $i$,
        \begin{equation}\label{eq_controluBi}
            \int_{B_i} \abs{u}^p \dd{x} \leq C \delta^p C(\varepsilon_1) \varepsilon_0^p.
        \end{equation}
        Since $B_i \cap K = \emptyset$, we may apply Korn-Poincaré inequality in $B_i$ and obtain a rigid motion $a_i$ such that
        \begin{equation}\label{eq_iKorn}
            \int_{B_i} \abs{u - a_i}^p \dd{x} \leq C \delta^p \int_{B_i} \abs{e(u)}^p \dd{x}. 
        \end{equation}
        We now show that for $x \in B_i \setminus R$, we have $\abs{u(x)} \leq C(\varepsilon_1) \varepsilon_0$.
        By definition of $R$, together (\ref{eq_C2}) and (\ref{eq_HU}), we know that for every such $x$ and for all $k$, the ray $[x_k,x]$ does not intersect $K$ and
        \begin{equation*}
            \int_{[x_k,x]} \abs{e(u)}^p \dd{\H^1} \leq C(\varepsilon_1) \int_{B(0,1)} \abs{e(u)}^p \dd{x} \leq C(\varepsilon_1) \varepsilon_0^p.
        \end{equation*}
        Since
        \begin{equation*}
            \frac{\dd}{\dd{t}}\bigl[u(x_k + t(x-x_k)) \cdot (x - x_k)\bigr] = \bigl(e(u)\bigl(x_k + t(x - x_k)\bigr) (x - x_k)\bigr) \cdot (x - x_k), 
        \end{equation*}
        we find
        \begin{equation*}
            \abs{(u(x) - u(x_k)) \cdot (x - x_k)} \leq C \int_{[x_k,x]} \abs{e(u)} \dd{\H^1} \leq C(\varepsilon_1) \varepsilon_0.
        \end{equation*}
        Using (\ref{eq_observation}), we deduce that $\abs{u(x) - u(x_k)} \leq C(\varepsilon_1) \varepsilon_0$, and by (\ref{eq_xKorn}), this implies $\abs{u(x)} \leq C(\varepsilon_1) \varepsilon_0$.
        It follows that the rigid motion $a_i$ has a small $L^1$ measure in $B_i \setminus R$ since
        \begin{equation*}
            \int_{B_i \setminus R} \abs{a_i}^p \dd{x} \leq C \int_{B_i \setminus R} \abs{u - a_i}^p \dd{x} + C \int_{B_i \setminus R} \abs{u}^p \dd{x} \leq \delta^p C(\varepsilon_1) \varepsilon_0^p.
        \end{equation*}
        As $\abs{R} \leq C \varepsilon_1$, we may choose $\varepsilon_1$ sufficiently small (depending on $N$, $\delta$) so that $\abs{B_i \setminus R} \geq \abs{B_i}/2$.
        Lemma \ref{lem_rigid} in the appendix then yields 
        \begin{equation*}
            \abs{a_i}^p \leq \delta^p C(\varepsilon_1) \varepsilon_0^p \quad \text{uniformly on $B_i$.}
        \end{equation*}
        Inserting this bound into (\ref{eq_iKorn}) gives (\ref{eq_controluBi}), and in turn (\ref{eq_controlu}).

        Consider a cut-off function $\varphi \in C^1_c(D;[0,1])$ such that $\varphi(x) = 1$ when $\mathrm{dist}(x,K) \geq 2 \delta$, $\varphi(x) = 0$ when $\mathrm{dist}(x,K) \leq \delta$ and $\abs{\nabla \varphi} \leq C \delta^{-1}$ everywhere.
        We extend $u$ on the whole annulus $D$ by $v(x) := \varphi(x) u(x)$.
        One estimates
        \begin{equation*}
            \abs{e(v)} \leq \abs{\nabla \varphi} \abs{u} + \varphi \abs{e(u)},
        \end{equation*}
        so
        \begin{align*}
            \int_D \abs{e(v)}^p \dd{x} &\leq C \delta^{-p} \int_{\set{x \in D | \mathrm{dist}(x,K) \geq \delta}} \abs{u}^p  \dd{x} + C \int_D \abs{e(u)}^p \dd{x}\\
                                       &\leq C \delta^{-N} C(\varepsilon_1) \varepsilon_0^p.
        \end{align*}
        As $v$ is Sobolev in the annulus $D$, Korn's inequality yields a rigid motion $a$ such that
        \begin{equation*}
            \int_{D} \abs{\nabla v - \nabla a}^p \dd{x} \leq C \delta^{-N} C(\varepsilon_1) \varepsilon_0^p.
        \end{equation*}
        and hence, there exists $\rho \in (1/4,1/2)$ such that
        \begin{equation*}
            \int_{\partial B(0,\rho)} \abs{\nabla v - \nabla a}^p \dd{x} \leq C \delta^{-N} C(\varepsilon_1) \varepsilon_0^p.
        \end{equation*}
        Moreover, (\ref{eq_DS}) gives
        \begin{equation*}
            \abs{\set{x \in B(0,1/2) | \mathrm{dist}(x,K) \leq 2 \delta}} \leq C \delta
        \end{equation*}
        so we may choose $\rho \in (1/4,1/2)$ such that the set 
        \begin{equation*}
            Z := \set{x \in \partial B(0,\rho) | \mathrm{dist}(x,K) \leq 2 \delta}
        \end{equation*}
        satisfies $\H^{N-1}(Z) \leq C \delta$ as well.
        By \cite[Lemma 22.32]{DavidBOOK}, the function $w := v - a$ defined on $\partial B(0,\rho)$ admits a $C^1$ extension to $B(0,\rho)$ such that
        \begin{equation*}
            \int_{B(0,\rho)} \abs{\nabla w}^2 \dd{x} \leq C \biggl(\int_{\partial B(0,\rho)} \abs{\nabla v - \nabla a}^p \dd{x}\biggr)^{2/p} \leq C \delta^{-2N/p} C(\varepsilon_1) \varepsilon_0^2.
        \end{equation*}

        Assume for the moment that $(u,K)$ is a plain quasiminimizer, we will treat later the topological case.
        Define $(\widetilde{u},\widetilde{K})$ by
        \begin{equation*}
            \widetilde{K} = \bigl(K \setminus B(0,\rho)\bigr) \cup Z
        \end{equation*}
        and
        \begin{equation*}
            \widetilde{u} =
            \begin{cases}
                u   &\ \text{in $B(0,1) \setminus \bigl(B(0,\rho) \cup K\bigr)$}\\
                w+a &\ \text{in $\overline{B}(0,\rho) \setminus Z$}.
            \end{cases}
        \end{equation*}
        Then $(\widetilde{u},\widetilde{K})$ is a competitor of $(u,K)$ in every ball $B(0,r)$ with $\rho < r < 1$, and hence
        \begin{multline*}
            \int_{B(0,\rho)} \CC{e(u)} \dd{x} + M^{-1} \H^{N-1}(K \cap B(0,\rho)) \\ \leq \int_{B(0,\rho)} \CC{e(w)} \dd{x} + M \H^{N-1}(Z) + h(1).
        \end{multline*}
        By Ahlfors-regularity (Proposition \ref{prop_AR}), there exists a constant $c_0 > 0$ which only depends on $N$, $M$, $\A$ such that
        \begin{equation*}
            \int_{B(0,\rho)} \CC{e(u)} \dd{x} + M^{-1} \H^{N-1}(K \cap B(0,\rho)) \geq c_0.
        \end{equation*}
        On the other hand, we can bound the right-hand side using
        \begin{equation*}
            \int_{B(0,\rho)} \abs{e(w)}^2 \dd{x} \leq C \delta^{-2N/p} C(\varepsilon_1) \varepsilon_0^2 \quad \text{and} \quad M \H^{N-1}(Z) \leq C M \delta.
        \end{equation*}
        We first fix $\delta > 0$ small enough (depending on $N$, $M$, $\A$) so that $M \H^{N-1}(Z) \leq c_0/10$, and then choose $\varepsilon_0$ sufficiently small (again depending on $N$, $M$, $\A$) so that $\int_{B(0,\rho)} \abs{e(w)}^2 \dd{x} \leq c_0/10$ and $h(1) \leq c_0/10$.
        This contradicts the previous lower bound.

        In case where $(u,K)$ is a topological quasimimimizers, choose a point $x_0 \in B(0,1/10)$ such that the set
        \begin{equation*}
            \Sigma = \set{z \in \partial B(0,\rho) | [z,x_0] \ \text{meets $K$}}
        \end{equation*}
        satisfies $\H^{N-1}(\Sigma) \leq C_1 \varepsilon_1$, where $C_1 \geq 1$ is a universal constant.
        We then redefine $\widetilde{K}$ by
        \begin{equation*}
            \widetilde{K} = \bigl(K \setminus B(0,\rho)\bigr) \cup Z \cup \Sigma.
        \end{equation*}
        The set $\Sigma$ is compact, since it is the image of $K \cap B(0,1)$ under the radial projection $\R^N \setminus \set{x_0} \to \partial B(0,1)$, hence $\widetilde{K}$ remains relatively closed in $\Omega$.
        Moreover, $\H^{N-1}(\Sigma) \leq C_1 \varepsilon_1$, so the previous contradiction argument still applies provided we also assume $\varepsilon_1 \leq c_0/(10 C_1)$.

        We now check that $\widetilde{K}$ is a topological competitor of $K$.
        Let $x,y \in B(0,1) \setminus B(0,\rho)$ be two points that are connected by a path $\gamma$ in $B(0,1) \setminus \widetilde{K}$.
        We claim that $x$ and $y$ are connected in $B(0,1) \setminus K$.
        If $\gamma$ never meets $\overline{B}(0,\rho)$, then $x$ and $y$ are also connected in $B(0,1) \setminus K$ because $\widetilde{K}= K$ outside $\overline{B}(0,\rho)$.
        Otherwise, let $x_1$ be the first point where $\gamma$ meets $\partial B(0,\rho)$, and $y_1$ be the last such point.
        Then $x$ is connected to $x_1$ by a path in $B(0,1) \setminus K$ and $y$ to $y_1$ by a path in $B(0,1) \setminus K$.
        Since $x_1, y_1 \notin \Sigma$, the segments $[z,x_1]$ and $[z,y_1]$ don't intersect $K$.
        Concatenating these four paths shows that $x$ and $y$ are connected in $B(0,1) \setminus K$, as required.
    \end{proof}

    The property of projection already ensures that the crack is rectifiable (assuming $\limsup_{r \to 0} h(r) < \varepsilon_0$).
    To see this, decompose as usual $K = F \cup G$ as the disjoint union of a rectifiable part $F$ and a purely unrectifiable part $G$.
    According to the Federer-Besicovitch projection theorem, we have
    \begin{equation*}
        \int_{G(N-1,N)} \H^{N-1}\bigl(p_V(G)\bigr) \dd{V} = 0.
    \end{equation*}
    Therefore, Theorem \ref{thm_projection} shows that for all $x \in K$ and $r > 0$ such that $B(x,r) \subset \Omega$ and $h(r) \leq \varepsilon_0$,
    \begin{equation*}
        C^{-1} r^{N-1} \leq \int_{G(N-1,N)} \H^{N-1}\bigl(p_V(F \cap B(x,r))\bigr) \dd{V} \leq \H^{N-1}(F \cap B(x,r)).
    \end{equation*}
    One deduces that $\H^{N-1}(G) = 0$ by standard density theorems \cite[Theorem 6.2]{Mattila}.

    The next results states that, given any ball, one can find a smaller shifted ball (with comparable radius) where $K$ has a small flatness.
    This is a standard property of uniformly rectifiable sets, typically derived from the ``Weak Geometric Lemma'' (WGL); see \cite[Section 73]{DavidBOOK}.
    Here we obtain it instead by a contradiction argument and the stability of the projection property under Hausdorff convergence.
    This proof is inspired by works on the Plateau problem, such as those of Fang \cite{Fang,FangPHD} and David \cite[Lemma 10.21]{David2019}, where the rectifiability of limits is obtained by different methods but used in the same spirit.



    \begin{proposition}[Flat in many places]\label{prop_flat}
        There exists $\varepsilon_0 > 0$ (depending on $N$, $M$, $\A$) and for all $\varepsilon > 0$, there exists $C(\varepsilon) \geq 1$ (depending on $N$, $M$, $\A$, $\varepsilon$) such that the following holds.
        If $(u,K)$ is a topological quasiminimizer in a domain $\Omega$, then for all $x \in K$ and all $r > 0$ with $B(x,r) \subset \Omega$ and $h(r) \leq \varepsilon_0$, there exists $y \in B(x,r/2)$ and $t \in [C(\varepsilon)^{-1} r,r/2]$ such that
        \begin{equation*}
            \beta(y,t) \leq \varepsilon.
        \end{equation*}
    \end{proposition}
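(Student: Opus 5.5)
We argue by contradiction and compactness, using the stability of the property of projections (Theorem \ref{thm_projection}) under Hausdorff limits. After rescaling (Remark \ref{rmk_scaling}) we may assume $x = 0$, $r = 1$. Suppose the statement fails for some $\varepsilon > 0$. Then there is a sequence of topological quasiminimizers $(u_j,K_j)$ in $B(0,1)$, with $0 \in K_j$ and $h_j(1) \leq \varepsilon_0$, such that
\begin{equation*}
    \beta_{K_j}(y,t) > \varepsilon \quad \text{for all } y \in B(0,1/2) \cap K_j \text{ and } t \in [j^{-1},1/2].
\end{equation*}
Centres off $K_j$ are harmless: if $K_j \cap B(y,t) = \emptyset$ the flatness vanishes, so every ball meeting $K_j$ can be recentred on $K_j$ at the cost of a factor $2$ in the radius. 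By Blaschke compactness we extract a subsequence such that $K_j \cap \overline{B}(0,3/4)$ converges in Hausdorff distance to a compact set $K_\infty \ni 0$.

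The first step is to transfer the two uniform properties to $K_\infty$. Ahlfors-regularity (Proposition \ref{prop_AR}) gives uniform lower and upper bounds $C^{-1}\rho^{N-1} \leq \H^{N-1}(K_j \cap B(z,\rho)) \leq C\rho^{N-1}$ for $z \in K_j$. The upper bound passes to the limit through the covering consequence: $K_j \cap B(z,\rho)$ is covered by $C(\rho/s)^{N-1}$ balls of radius $s$. Hence $K_\infty$ satisfies $\H^{N-1}(K_\infty \cap B(z,\rho)) \leq C\rho^{N-1}$, and we can take the lower bound from the projections rather than from measures. For the projection property we use Theorem \ref{thm_projection} on $B(z_j,\rho)$ with $z_j \in K_j$ and $z_j \to z \in K_\infty$. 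For each $V$, if $K_j \cap \overline{B}(z_j,\rho)$ converges to a compact set $L \subset K_\infty \cap \overline{B}(z,\rho+\eta)$, then $p_V(K_j \cap \overline B(z_j,\rho))$ converges in Hausdorff distance to $p_V(L)$. Upper semicontinuity of $\H^{N-1}$ (Lebesgue measure in $V$) along Hausdorff convergence of compact sets gives $\limsup_j \H^{N-1}(p_V(K_j \cap \overline B)) \leq \H^{N-1}(p_V(L))$. Reverse Fatou on the Grassmannian then yields
\begin{equation*}
    \int_{G(N-1,N)} \H^{N-1}\bigl(p_V(K_\infty \cap \overline{B}(z,2\rho))\bigr) \dd{V} \geq C^{-1}\rho^{N-1}
\end{equation*}
for all $z \in K_\infty \cap B(0,1/2)$ and small $\rho$. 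This is the stability observed by Dibos and Koepfler.

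The second step deduces that $K_\infty$ is rectifiable, exactly as in the remark after Theorem \ref{thm_projection}. Split $K_\infty = F \cup G$; the Besicovitch--Federer theorem kills the projections of $G$, so $\H^{N-1}(F \cap B(z,2\rho)) \geq C^{-1}\rho^{N-1}$ at every $z \in K_\infty$. The density theorem then gives $\H^{N-1}(G) = 0$. Since $0 \in K_\infty$, the measure $\H^{N-1}(K_\infty \cap B(0,1/4))$ is positive, so there is a point $y_\infty \in K_\infty \cap B(0,1/4)$ at which $K_\infty$ has an approximate tangent plane $P$. Because of the lower density bound coming from the projections, this approximate tangent becomes a true one: if $K_\infty \cap B(y_\infty,t)$ had a point at distance $\geq \eta t$ from $y_\infty + P$, the small ball around it would carry mass $\gtrsim (\eta t)^{N-1}$ off the cone, contradicting approximate tangency for small $t$. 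Hence there is $t_0 \in (0,1/8)$ with $\beta_{K_\infty}(y_\infty,2t_0) \leq \varepsilon/4$.

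The third step transfers this back to $K_j$. Pick $y_j \in K_j$ with $y_j \to y_\infty$. Hausdorff convergence puts $K_j \cap B(y_j,t_0)$ in a small neighbourhood of $K_\infty \cap B(y_\infty,2t_0)$ for large $j$. Therefore $\beta_{K_j}(y_j,t_0) \leq \varepsilon/2 + o(1) < \varepsilon$, while $t_0 \geq j^{-1}$, which is a contradiction. Hence $C(\varepsilon)$ exists.

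I expect the main obstacle to be the first step, namely the semicontinuity of the projection integral. Hausdorff convergence gives only upper semicontinuity of the measure of projections, which fortunately is the direction we need. One must nevertheless handle the boundary of balls carefully, using closed balls and enlarged radii, and check measurability in $V$. The flatness definition's requirement that the plane pass through the centre is handled automatically since $y_j \in K_j$.
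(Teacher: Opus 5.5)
Your proposal is correct and follows essentially the same route as the paper. Both arguments run by compactness and contradiction: the property of projections (Theorem \ref{thm_projection}) passes to the Hausdorff limit $K_\infty$, Besicovitch--Federer gives rectifiability, a point of vanishing flatness is found, and that flatness is transferred back to $K_j$. Your use of upper semicontinuity with reverse Fatou (instead of the paper's $\varepsilon$-neighbourhoods and dominated convergence), and your explicit upgrade from approximate to true tangent planes via the lower density bound, simply spell out steps the paper calls standard.
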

    \begin{proof}
        Fix $\varepsilon_0$ smaller than in Proposition \ref{prop_AR} and Theorem \ref{thm_projection}.
        Up to rescaling, it suffices to prove for the statement for quasiminimizers in the unit ball.
        We proceed by contradiction for a given $\varepsilon > 0$.
        Suppose there exists a sequence of topological quasiminimizers $(u_i,K_i)_i$ with gauges $h_i$ in $B(0,1)$ such that, for all $i$, $h_i(1) \leq \varepsilon_0$ and, for all $y \in B(0,1/2)$, for all $t \in [2^{-i},1/2]$, one has $\beta_{K_i}(x,r) > \varepsilon$.

        We extract a subsequence (not relabeled) such that $(K_i)_i$ converges to a relatively closed subset $K \subset B(0,1)$ in local Hausdorff distance (see \cite[Proposition 34.6]{DavidBOOK}).
        Since each $K_i$ contains $0$, the limit set $K$ also contains $0$.
        It is standard that limit set $K$ inherits Ahlfors-regularity from the sequence $(K_i)$; that is, there exists $C_0 \geq 0$ such that for all $x \in K$ and all $r > 0$ with $B(x,r) \subset B(0,1)$,
        \begin{equation*}
            C_0^{-1} r^{N-1} \leq \H^{N-1}(K \cap B(x,r)) \leq C_0 r^{N-1}.
        \end{equation*}
        Next, fix $x \in K \cap B(0,1)$ and $r > 0$ with $B(x,r) \subset B(0,1)$.
        We claim that $K$ has the property of projection in $B(x,r)$.
        Without loss of generality, we may assume that $\overline{B}(x,r) \subset B(0,1)$.
        Let $0 < \rho < r$ and $\varepsilon \in (0,r-\rho)$.
        For all sufficiently large $i$, every point of $K_i \cap B(x,\rho)$ lies at distance at most $\varepsilon$ from $K$; hence
        \begin{multline*}
            \int_{G(N-1,N)} \H^{N-1}\bigl(p_V(K_i \cap B(x,\rho))\bigr) \dd{V} \\
            \leq \int_{G(N-1,N)} \H^{N-1}\bigl(\set{z \in V | \mathrm{dist}\bigl(z,p_V(K \cap \overline{B}(x,r))\bigr) \leq \varepsilon)}\bigr) \dd{V}.
        \end{multline*}
        Since $x \in K$, there exists a sequence of points $(x_i)_i$ with $x_i \in K_i \cap B(0,1)$ and $x_i \to x$.
        In particular, for $i$ large enough we have
        \begin{equation*}
            \int_{G(N-1,N)} \H^{N-1}\bigl(p_V(K_i \cap B(x_i,\rho/2))\bigr) \dd{V}
            \leq \int_{G(N-1,N)} \H^{N-1}\bigl(p_V(K_i \cap B(x,\rho))\bigr) \dd{V}.
        \end{equation*}
        By applying Theorem \ref{thm_projection} to $(u_i,K_i)$ in the ball $B(x_i,\rho/2)$, we obtain
        \begin{equation*}
            \int_{G(N-1,N)} \H^{N-1}\bigl(\set{z \in V | \mathrm{dist}\bigl(z,p_V(K \cap \overline{B}(x,r))\bigr) \leq \varepsilon)}\bigr) \dd{V} \geq C^{-1} \rho^{N-1}.
        \end{equation*}
        Letting $\rho \to r$ and $\varepsilon \to 0$, and using dominated convergence, we deduce that
        \begin{equation*}
            \int_{G(N-1,N)} \H^{N-1}\bigl(p_V(K \cap \overline{B}(x,r))\bigr) \dd{V} \geq C^{-1} r^{N-1}.
        \end{equation*}
        This proves the claim, and therefore $K$ is rectifiable.

        As a standard consequence from Ahlfors-regularity and rectifiability, we have
        \begin{equation*}
            \lim_{r \to 0} \beta(x,r) = 0 \quad \text{for} \quad \text{$\H^{N-1}$-a.e. $x \in K$.}
        \end{equation*}
        Since $0 \in K$ and $K$ is Ahlfors-regular, we also have $\H^{N-1}(K \cap B(0,1/10)) > 0$, so we can find such a point $x \in K \cap B(0,1/10)$.
        Then, there exists $r \in (0,1/10)$ and a hyperplane $P$ through $x$ such that $B(x,r) \subset B(0,1)$ and $$K \cap B(x,r) \subset \set{y \in B(x,r) | \mathrm{dist}(y,V) \leq \varepsilon r/200}.$$
        Let $(x_i) \in K_i$ be a sequence such that $x_i \to i$.
        For $i$ sufficiently large,
        \begin{equation*}
            K_i \cap B(x_i,r/10) \subset \set{y \in B(x_i,r/10) | \mathrm{dist}(y,V) \leq \varepsilon r/100},
        \end{equation*}
        and hence $\beta_{K_i}(x_i,r/10) \leq \varepsilon/10$.
        Choosing $i$ large enough so that $2^{-i} < r/10$, this contredicts our assumption that $\beta_{K_i}(y,t) \geq \varepsilon$ for all $y \in K_i \cap B(0,1/2)$ and all $t \geq [2^{-i},1/2]$.
    \end{proof}

    Combining the previous Proposition \ref{prop_flat} (flat in many places) with Corollorary \ref{cor_carleson} (small elastic energy in many places), we obtain both conditions simultaneously.

    \begin{corollary}\label{cor_nice}
        There exists a constant $\varepsilon_0 > 0$ (depending on $N$, $M$, $\A$) and for every $p \in (1,2]$ and every $\varepsilon > 0$, there exists a constant $C(\varepsilon) \geq 1$ (depending on $N$, $\A$, $p$, $\varepsilon$) such that the following holds.
        If $(u,K)$ is a topological quasiminimizer in a domain $\Omega$, then for all $x \in K$ and $r > 0$ with $B(x,r) \subset \Omega$ and $h(r) \leq \varepsilon_0$, there exists $y \in B(x,r/2)$ and $t \in [C(\varepsilon)^{-1} r,r/2]$ such that
        \begin{equation*}
            \beta(y,t) + \omega_p(y,t) \leq \varepsilon.
        \end{equation*}
    \end{corollary}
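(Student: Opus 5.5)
Apply the two previous results in succession: first Proposition \ref{prop_flat}, then Corollary \ref{cor_carleson} inside the ball it produces. Take $\varepsilon_0$ smaller than the constants in Proposition \ref{prop_AR}, Corollary \ref{cor_carleson} and Proposition \ref{prop_flat}. Let $x \in K$, $r>0$ with $B(x,r)\subset\Omega$ and $h(r)\le\varepsilon_0$, and fix $\eta>0$ to be chosen in terms of $\varepsilon$.

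\textbf{Step 1 (a flat ball).} Proposition \ref{prop_flat} with parameter $\eta$ gives $y_1 \in B(x,r/2)$ and $t_1 \in [C_1(\eta)^{-1} r, r/2]$ with $\beta(y_1,t_1)\le \eta$. I would like $y_1\in K$, so that the Carleson machinery applies in $B(y_1,t_1)$. The contradiction argument of Proposition \ref{prop_flat} actually produces centres on $K_i$, and $\beta$ is only meaningful when $K\cap B(y_1,t_1)\neq\emptyset$. If needed, recentre at a point $y_1'\in K\cap B(y_1,t_1/2)$ and halve the radius. This costs at most a factor $4$ in $\beta$, since the hyperplane must now pass through $y_1'$, which lies within $\eta t_1$ of the old one.

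\textbf{Step 2 (small energy in a smaller ball).} Since $B(y_1,t_1)\subset B(x,r)\subset\Omega$ and $h(t_1)\le h(r)\le\varepsilon_0$, apply Corollary \ref{cor_carleson} in $B(y_1,t_1)$ with parameter $\varepsilon/2$. This gives $y\in B(y_1,t_1/2)$ and $t\in[C_2(\varepsilon/2)^{-1}t_1, t_1/2]$ with $\omega_p(y,t)\le\varepsilon/2$.

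\textbf{Step 3 (flatness is inherited).} The main obstacle is to show that flatness survives passing to the sub-ball. We have $B(y,t)\subset B(y_1,t_1)$, so $K\cap B(y,t)$ lies in the $\eta t_1$-neighbourhood of a hyperplane $P$. The displayed monotonicity $\beta(x,t)\le (r/t)\beta(x,r)$ is stated only for concentric balls, and $y$ need not lie on $P$. I would handle this as in Step 1: either $y$ is in $K$, so it is within $\eta t_1$ of $P$, or we recentre at a point of $K$. Then translate $P$ to pass through $y$, which gives $\beta(y,t)\le 2\eta t_1/t\cdot(\text{normalisation})\le 2C_2(\varepsilon/2)\eta$. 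Choose $\eta=\varepsilon/(4C_2(\varepsilon/2))$. This ordering is consistent: $C_2$ depends only on $\varepsilon$, and it is fixed before $\eta$ and hence before $C_1(\eta)$. Then $\beta(y,t)+\omega_p(y,t)\le\varepsilon$.

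\textbf{Step 4 (bookkeeping).} We have $|y-x|<r/2+t_1/2\le 3r/4$. To land in $B(x,r/2)$, start instead from the ball $B(x,r/2)$, or simply apply Proposition \ref{prop_flat} at scale $r/2$. Finally $t\ge C_2^{-1}C_1(\eta)^{-1}r/2=:C(\varepsilon)^{-1}r$ and $t\le r/2$, which completes the proof.
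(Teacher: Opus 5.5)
Your proposal is correct and follows the paper's own argument. You apply Proposition~\ref{prop_flat} with a parameter $\eta$, then Corollary~\ref{cor_carleson} inside the resulting flat ball, and transfer the flatness to the smaller ball at the cost of the factor $C(\varepsilon/2)$, which is absorbed by choosing $\eta$ afterwards. Your extra bookkeeping fills in details the paper's proof leaves implicit: taking both centres on $K$ (which both earlier proofs in fact provide), handling the non-concentric transfer of $\beta$, and starting at scale $r/2$ so that the final centre lies in $B(x,r/2)$.
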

    \begin{proof}
        This is standard.
        Fix $\varepsilon_0$ smaller than in Corollary \ref{cor_carleson} and Proposition \ref{prop_flat}.
        Let $\varepsilon_1 > 0$ be a small constant to be chosen later.
        Assuming that $h(r) \leq \varepsilon_0$, Proposition \ref{prop_flat} yields a point $y \in B(x,r/2)$ and a radius $t \in [C(\varepsilon_1)^{-1}r,r/2]$ such that $\beta(y,t) \leq \varepsilon_1$.
        Applying Corollary \ref{cor_carleson} in $B(y,t)$, we then obtain a point $z \in B(y,t/2)$ and a radius $s \in [C(\varepsilon)^{-1}t,t/2]$ such that $\omega_p(z,s) \leq \varepsilon/2$.
        Since $s \geq C(\varepsilon)^{-1} t$, we still control $\beta(z,s) \leq C(\varepsilon) \beta(y,t) \leq C(\varepsilon) \varepsilon_1$.
        Finally, we choose $\varepsilon_1$ (depending on $N$, $M$, $\A$, $p$ and $\varepsilon)$ so that $\beta(z,s) \leq \varepsilon/2$.
        This choice gives
        \begin{equation*}
            \beta(z,s) + \omega_p(z,s) \leq \varepsilon.
        \end{equation*}
    \end{proof}

    \section{Fine estimate on holes through a projection}\label{section_jump}

    We define a ``normalized jump'' in the spirit of \cite{DavidBOOK}.
    Let $(u,K)$ be a pair in a ball $B(x,r) \subset \R^N$ such that $\beta(x,r) \leq 1/2$.
    Let $\nu_0$ be a unit normal to a hyperplane realizing the infimum in the definition of $\beta(x,r)$.
    Let $a_1, a_2$ be two rigid motions that approximate $u$ in the lower and upper part of $B(x,r)$ and write
    \begin{equation*}
        a_i(y) = b_i + A_i(y - x),
    \end{equation*}
    where $b_i \in \R^N$ and $A_i \in \R^{N \times N}_{\mathrm{skew}}$ are given by
    \begin{equation*}
        A_i = \fint_{D_i} \frac{\nabla u(y) - \nabla u(y)^T}{2} \dd{y}, \quad b_i = \fint_{D_i} u(y) \dd{y} + A_i(x - x_i), \quad 
    \end{equation*}
    and where $D_1 = B(x_1,r/8)$ and $D_2 = B(x_2,r/8)$ are balls with centers $x_1 = x + (3r/4) \nu_0$ and $x_2 = x - (3r/4) \nu_0$.
    The \emph{normalized jump} of $u$ in $B(x,r)$ is then
    \begin{equation*}
        J(x,r) := \frac{\abs{b_1 - b_2} + r |A_1 - A_2|}{\sqrt{r}}.
    \end{equation*}
    This quantity is invariant under rescaling to the unit ball; see Remark \ref{rmk_scaling}.
    We now recall \cite[Lemma 4.3]{LL3} on the initialization of the jump (the assumption on $D_1, D_2$ in the statement below is only required for \emph{topological} quasiminimizers).

    \begin{lemma}[Initialization of the jump]\label{lem_jump}
        There exists constants $\varepsilon_0$ (depending on $N$, $M$, $\A$) and for every $p \in (2(N-1)/N,2]$, a constant $\eta_0 \geq 1$ (depending on $N$, $M$, $\A$, $p$) such that the following holds.
        Let $(u,K)$ be a topological quasiminimizer with any gauge $h$ in $\Omega$.
        For all $x \in K$ and $r > 0$ with $B(x,r) \subset \Omega$ such that
        \begin{equation*}
            \beta(x,r) + \omega_p(x,r) \leq \eta_0, \quad h(r) \leq \varepsilon_0
        \end{equation*}
        and
        \begin{equation*}
            \text{$D_1$ and $D_2$ lie in the same connected component of $\Omega \setminus K$,}
        \end{equation*}
        where $D_1, D_2$ are as defined at the beginning of Section \ref{section_jump}, we have
        \begin{equation*}
            J(x,r) \geq \eta_0.
        \end{equation*}
    \end{lemma}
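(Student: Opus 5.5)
We argue by contradiction and compactness, and reduce to the unit ball by Remark~\ref{rmk_scaling}. The quantities $\beta$, $\omega_p$ and $J$ are scale invariant, so it suffices to treat $x = 0$, $r = 1$. The idea is that if the jump between $D_1$ and $D_2$ were tiny, the crack would be removable at a cost much smaller than $\H^{N-1}(K \cap B(0,1/2)) \geq C^{-1}$, which is impossible by Ahlfors-regularity (Proposition~\ref{prop_AR}). We subtract the rigid motion $a_1$ from $u$, which is allowed for quasiminimizers. Then $J(0,1) \leq \eta_0$ means $\abs{b_1-b_2} + \abs{A_1-A_2} \leq \eta_0$, so after normalization $u$ is close to $0$ in mean on both $D_1$ and $D_2$.

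First we control $u$ away from the crack on both sides of the hyperplane $P$ realizing $\beta(0,1) \leq \eta_0$. Let $U_\pm$ be the two components of $\set{y \in B(0,1) | \mathrm{dist}(y,P) > 2\eta_0}$. Each is a convex region avoiding $K$ that contains $D_1$, respectively $D_2$. By the Korn--Poincaré inequality on $U_\pm$ (a uniformly Lipschitz domain, or a slightly smaller one), there are rigid motions $a_\pm$ with $\norm{u - a_\pm}_{L^p(U_\pm)} \leq C\omega_p(0,1)^{1/2} \leq C\eta_0^{1/2}$. Comparing on $D_1, D_2$ with the averages defining $b_i, A_i$, and using Lemma~\ref{lem_rigid}, gives $\abs{a_+} + \abs{a_-} \leq C\eta_0^{1/2}$ uniformly on $B(0,1)$. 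Hence $\norm{u}_{L^p(U_+ \cup U_-)} \leq C\eta_0^{1/2}$ and $\norm{e(u)}_{L^p(B(0,1))} \leq \eta_0^{1/2}$.

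Next we build the competitor. On the annulus $D = B(0,1/2) \setminus \overline{B}(0,1/4)$, let $v = \varphi u$, where the cutoff $\varphi$ vanishes in the slab $\set{\mathrm{dist}(\cdot,P) \leq 2\eta_0}$ and equals $1$ outside $\set{\mathrm{dist}(\cdot,P) \leq 4\eta_0}$, with $\abs{\nabla\varphi} \leq C\eta_0^{-1}$. The danger is the term $\eta_0^{-1}\abs{u}$ on the transition layer. We handle it by taking the cutoff width to be a fixed small $\delta$ instead of $\eta_0$, with $\eta_0 \ll \delta$, and using the interior $L^p$ bound on $u$ in $\set{\mathrm{dist}(\cdot,P) \geq \delta}$. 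This gives $\norm{e(v)}_{L^p(D)} \leq C\delta^{-1}\eta_0^{1/2}$. As in the proof of Theorem~\ref{thm_projection}, we choose $\rho \in (1/4,1/2)$ with good traces and apply \cite[Lemma 22.32]{DavidBOOK}. This extends $v$ inside $B(0,\rho)$ with energy at most $C(\delta)\eta_0$, where the restriction $p > 2(N-1)/N$ is exactly what that extension lemma needs. We set $\widetilde{K} = (K \setminus B(0,\rho)) \cup Z$, where $Z = \partial B(0,\rho) \cap \set{\mathrm{dist}(\cdot,P) \leq \delta}$ has measure $\leq C\delta$. The quasiminimality inequality, combined with $\H^{N-1}(K \cap B(0,\rho)) \geq c_0$, gives $c_0 \leq CM\delta + C(\delta)\eta_0 + h(1)$. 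We fix $\delta$, then $\eta_0$ and $\varepsilon_0$, to reach a contradiction.

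The main obstacle is the topological constraint, which is why we need the hypothesis that $D_1, D_2$ are connected in $\Omega \setminus K$. The competitor joins the two sides through $B(0,\rho)$. Points outside $B(0,\rho)$ that were separated by $K$ must stay separated by $\widetilde{K}$, and $Z$ alone does not guarantee this. Since $\beta$ is small, every point of $B(0,1) \setminus B(0,\rho)$ away from the slab is connected, within $B(0,1)\setminus K$, to $U_+$ or $U_-$ via a short path perpendicular to $P$. The hypothesis then shows that $U_+$ and $U_-$ lie in a single component of $\Omega \setminus K$. So the only components that could be merged by removing $K \cap B(0,\rho)$ are those meeting $\partial B(0,\rho)$ inside the slab, and these are cut off by $Z$. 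A path argument analogous to the end of the proof of Theorem~\ref{thm_projection} then verifies that $(\widetilde{u},\widetilde{K})$ is a topological competitor. We expect the careful bookkeeping of which components touch $\partial B(0,\rho)$ near $P$ to be the delicate point.
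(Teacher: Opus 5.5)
Your argument is correct. The paper gives no proof of this lemma; it quotes it from \cite[Lemma 4.3]{LL3}. What your proposal adds is a self-contained proof, built from the same competitor the paper uses in the proof of Theorem~\ref{thm_projection}. That competitor consists of a cutoff $v=\varphi u$ on the annulus, Korn's inequality there, a good sphere $\partial B(0,\rho)$, the extension lemma \cite[Lemma 22.32]{DavidBOOK} (where $p>2(N-1)/N$ enters), and a wall $Z$ of measure $O(\delta)$.

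The real difference is how $\abs{u}$ is controlled away from the crack.
\begin{itemize}
\item In Theorem~\ref{thm_projection} there is no flatness, so the paper needs rays from $2N$ well-spread points together with a Federer--Fleming type projection estimate.
\item Here flatness gives two large convex $K$-free regions $U_\pm\supset D_1,D_2$. Korn on each gives rigid motions within $C\eta_0^{1/2}$ of $a_1$ and $a_2$, and $J\le\eta_0$ forces $a_1$ and $a_2$ to agree up to $C\eta_0$ on $B(0,1)$.
\item Taking the cutoff width to be a fixed $\delta\gg\eta_0$ is exactly the right fix for the $\abs{\nabla\varphi}\abs{u}$ term.
\end{itemize}

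The topological step is simpler than you expect, and it replaces the extra set $\Sigma$ added in Theorem~\ref{thm_projection}. Since $K\cap\partial B(0,\rho)\subset Z$, the first and last points where a path in $\Omega\setminus\widetilde K$ meets $\partial B(0,\rho)$ lie in $U_+\cup U_-$. The outer portions of the path avoid $K$. By hypothesis $U_+\cup U_-$ lies in a single component of $\Omega\setminus K$, so the endpoints are connected in $\Omega\setminus K$. No bookkeeping of components touching the slab is needed.

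Two minor points to keep straight:
\begin{itemize}
\item $Z$ must contain all of $\partial B(0,\rho)\cap\{\varphi\neq 1\}$, so that the traces of $u$ and $w+a$ match off $Z$.
\item The weights $M^{-1}$ and $M$ do not cancel on the shell $B(0,\rho')\setminus\overline{B}(0,\rho)$. So pick $\rho$ with $\H^{N-1}(K\cap\partial B(0,\rho))=0$ and let $\rho'\downarrow\rho$.
\end{itemize}

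Finally, no compactness is actually used: your argument is a direct quantitative contradiction. It produces a small $\eta_0$, as it should; the ``$\eta_0\geq 1$'' in the statement is a typo.
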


    We will also need Lemma 4.4 from \cite{LL3}, whose statement we recall below.

    \begin{lemma}[Holes through slicing]\label{lem_slicing}
        Let $(u,K)$ be a pair in $B(0,1)$.
        Assume there exists $\varepsilon \in (0,1/4)$ such that $\beta(x,r) \leq \varepsilon$.
        Let $\nu_0$ and $a_1$, $a_2$ be as defined at the beginning of Section \ref{section_jump}.
        Then, for all unit vector $\nu \in \mathbf{S}^{N-1}$ satisfying $\abs{\nu - \nu_0} \leq \varepsilon$, we have
        \begin{equation*}
            J(\nu) \H^{N-1}(S_{\nu})^2 \leq C \varepsilon^{-1} \biggl(\int_{B(0,1)} \abs{\nabla u} \dd{x}\biggr)^{1/2},
        \end{equation*}
        where $C \geq 1$ is a universal constant, $S_{\nu}$ are the holes through slicing in the direction $\nu$,
        \begin{equation*}
            S_{\nu} = V \cap B(0,(1-4\varepsilon)) \setminus p_V(K \cap B(0,1)) \quad \text{with} \quad V = \nu^\perp,
        \end{equation*}
        and $J(\nu)$ denotes the component of the jump in the direction $\nu$,
        \begin{equation*}
            J(\nu) := \abs{(b_1 - b_2) \cdot \nu} + \abs{(A_1 - A_2) \nu}.
        \end{equation*}
    \end{lemma}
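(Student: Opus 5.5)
The statement to prove is Lemma \ref{lem_slicing}, recalled from \cite[Lemma 4.4]{LL3}. I would prove it by slicing in the direction $\nu$. After rescaling, the ball is $B(0,1)$. Put $V = \nu^\perp$. For $z \in S_\nu$, the line $z + \R\nu$ does not meet $K \cap B(0,1)$. The flatness condition $\beta \leq \varepsilon$ together with $\abs{\nu - \nu_0} \leq \varepsilon$ means this line crosses the whole slab containing $K$, and it enters both the upper region near $D_1$ and the lower region near $D_2$. The margin $1 - 4\varepsilon$ is there so that the segment from the upper half to the lower half stays inside $B(0,1)$.

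Along such a segment $u$ is Sobolev, and the one-dimensional identity
\begin{equation*}
    \frac{\dd}{\dd t}\bigl[u(z + t\nu)\cdot\nu\bigr] = e(u)(z+t\nu)\nu\cdot\nu
\end{equation*}
bounds the jump of $u\cdot\nu$ across the slab by $\int \abs{e(u)}\dd\H^1$ along the line. This controls only the normal component $u \cdot \nu$. To recover $(b_1-b_2)\cdot\nu$ and $(A_1-A_2)\nu$, I would vary both $z$ and $\nu$ slightly: $a_i(y)\cdot\nu = b_i\cdot\nu + (A_i y)\cdot\nu$ is affine in $y$. Its difference between the top and bottom sides, averaged over the two ends of each line, determines $J(\nu)$. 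This holds provided the set of good $z \in S_\nu$ is spread out enough, which is why $\H^{N-1}(S_\nu)$ appears squared.

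Concretely, the function $\phi(y) = (a_1 - a_2)(y)\cdot\nu$ is affine on $V$, with constant part $(b_1-b_2)\cdot\nu$ and gradient $-(A_1-A_2)\nu$ restricted to $V$ (the matrix is skew, so $(A y)\cdot\nu = -y\cdot A\nu$). The key estimate is then
\begin{equation*}
    \int_{S_\nu} \abs{\phi} \dd\H^{N-1} \lesssim \int_{B(0,1)} \abs{\nabla u} + \text{(approximation errors of } u \text{ by } a_i \text{ in } D_i\text{)}.
\end{equation*}
To get it, I would integrate the one-dimensional bound over $z \in S_\nu$ (Fubini), and compare $u$ at the two ends with $a_1$, $a_2$ through the Poincaré inequality in a neighbourhood of $D_i$ (for pairs, $u$ is Sobolev there because those regions avoid $K$ by flatness).

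Next I need an elementary lower bound for an affine function $\phi$ on a set $S$ of measure $s$ inside the unit disk of $V$: $\int_S\abs{\phi} \gtrsim s^2 \cdot(\abs{\phi(0)} + \abs{\nabla\phi})$. It holds because the sublevel set $\set{\abs{\phi}\leq \lambda}$ is a slab of width $\lesssim \lambda/\abs{\nabla\phi}$ (when the slope dominates), so its intersection with the disk has measure $\lesssim \lambda/\abs{\nabla \phi}$. Choosing $\lambda \approx s\,\norm{\phi}$ leaves measure $\gtrsim s$ where $\abs{\phi}\gtrsim s\,\norm{\phi}$. Combining this with the previous step gives $J(\nu)\,\H^{N-1}(S_\nu)^2 \lesssim \int\abs{\nabla u}$ up to geometric factors; the $\varepsilon^{-1}$ comes from the tilt of the segments relative to $\nu_0$. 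The square root on the right-hand side of the lemma comes from the $\sqrt r$ normalization and would be obtained from this linear bound by Hölder.

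The main obstacle is the comparison between the values of $u$ at the ends of the segments and the rigid motions $a_i$. The segments do not end inside $D_i$, so I would extend each segment to pass through $D_i$. Tilted lines do pass through the balls $D_i$ of radius $1/8$, at least for $z$ near the axis; for other $z$, I would compare in the upper and lower half-regions that avoid $K$, using Korn's inequality there. Handling the full component $\abs{(A_1-A_2)\nu}$, and not just its restriction to $V$, requires using the $\nu$-component of the line integral near the ends. I expect this bookkeeping to be the delicate part.
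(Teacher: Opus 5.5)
Your skeleton is the right one: slice along $\nu$, integrate $\frac{\dd}{\dd t}[u(z+t\nu)\cdot\nu]=e(u)(z+t\nu)\nu\cdot\nu$ along the chord through $z\in S_\nu$ (which misses $K$), use Fubini over $S_\nu$, and finish with the bound $\int_E\abs{\phi}\geq C^{-1}\norm{\phi}_{L^\infty(B)}\abs{E}^2/\abs{B}$ for affine $\phi$, which is Lemma~\ref{lem_rigid} with $p=1$. However, your last step is wrong, the obstacle you anticipate is not the real one, and the real one is left open.

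The square root cannot come ``by H\"older'' from a linear bound. The printed inequality is not homogeneous in $u$: replacing $u$ by $\lambda u$ multiplies the left side by $\lambda$ and the right side by $\lambda^{1/2}$. For instance, take $K=\emptyset$ and $u$ smooth with $u=\nu_0$ on $D_1$, $u=0$ on $D_2$, and let $\lambda\to\infty$. So the inequality is false as written. What the paper actually uses in (\ref{eq_slicing}) is $C\varepsilon^{-1}\sqrt{\omega_1(0,1)}=C\varepsilon^{-1}\int_{B(0,1)\setminus K}\abs{e(u)}$; the root only undoes the square in the definition of $\omega_1$. Your linear bound is therefore the real target, and it must be stated with $\abs{e(u)}$. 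With $\abs{\nabla u}$ the comparison step is plain Poincar\'e, but the result is useless in Proposition~\ref{prop_projection}, where only $e(u)$ is small.

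The ``full component'' $(A_1-A_2)\nu$ is also not an issue. Since $A_1-A_2$ is skew, $(A_1-A_2)\nu\cdot\nu=0$. Hence $(a_1-a_2)(z+t\nu)\cdot\nu=\phi(z)$ for all $t$; this invariance is what lets you compare $u\cdot\nu$ at arbitrary points above and below the slab on the same chord, and you use it without saying so. Moreover $(A_1-A_2)\nu\in V$, so $\sup_{V\cap B(0,1)}\abs{\phi}=\abs{(b_1-b_2)\cdot\nu}+\abs{(A_1-A_2)\nu}=J(\nu)$ exactly. No variation of $\nu$ is needed.

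The genuinely delicate point is to bound $\abs{(u-a_i)(z+t\nu)}$ at chord points far from $D_i$ by $\int\abs{e(u)}$, and your proposal leaves it open. Korn's inequality in gradient form is false in $L^1$ (Ornstein), so it cannot be invoked as you suggest. What works is the following.
\begin{enumerate}
\item Let $U_1=\{y\in B(0,1): y\cdot\nu_0>\varepsilon\}$ and $U_2=-U_1$. These caps miss $K$, contain $D_i$, and are uniformly Lipschitz for $\varepsilon<1/4$.
\item The $L^1$ Korn--Poincar\'e inequality of $BD$ gives rigid motions $\tilde a_i=\tilde b_i+\tilde A_i\,\cdot$ with $\norm{u-\tilde a_i}_{L^1(U_i)}\le C\int_{U_i}\abs{e(u)}$.
\item To pass to the specific $a_i$, bound the translation part by $\fint_{D_i}\abs{u-\tilde a_i}$. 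For the rotation part, write $A_i-\tilde A_i=\fint_{D_i}\mathrm{skew}\,\nabla(u-\tilde a_i)$. The divergence theorem gives $\abs{A_i-\tilde A_i}\le\abs{D_i}^{-1}\int_{\partial D_i}\abs{u-\tilde a_i}\dd\H^{N-1}$. The $BD$ trace inequality $\int_{\partial D}\abs{w}\le C(\int_D\abs{w}+\int_D\abs{e(w)})$ then closes the estimate.
\item Average the chord estimate over $t$ in the part $I_i(z)$ of the chord lying in $U_i$. Since $\abs{z}<1-4\varepsilon$, one has $\H^1(I_i(z))\gtrsim\sqrt{\varepsilon}$, and Fubini yields $\int_{S_\nu}\abs{\phi}\le C\varepsilon^{-1/2}\int_{B(0,1)\setminus K}\abs{e(u)}$.
\end{enumerate}
The $\varepsilon$-dependence comes from these short chords near the rim of $V\cap B(0,1-4\varepsilon)$, not from the tilt of $\nu$, which only moves the slab by $O(\varepsilon)$. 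This argument even gives $\varepsilon^{-1/2}$ in place of $\varepsilon^{-1}$.
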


    \begin{proposition}\label{prop_projection}
        There exists a constant $\varepsilon_0 > 0$ (which depends on $N$, $M$, $\A$) and for all $p \in (2(N-1)/N,2]$, for all $\varepsilon > 0$, there exists $\varepsilon_1 > 0$ (which depends on $N$, $M$, $\A$, $p$ and $\varepsilon$) such that the following holds.
        Let $(u,K)$ be a topological quasiminimizer with any gauge $h$ in a domain $\Omega$.
        For every $x \in K$ and $r > 0$ such that $B(x,r) \subset \Omega$ and 
        \begin{equation*}
            \beta(x,r) + \omega_p(x,r) \leq \varepsilon_1 \quad \text{and} \quad h(r) \leq \varepsilon_0,
        \end{equation*}
        there exists a hyperplane $V \in G(N-1,N)$ such that, for all $W \in G(N-1,N)$ with $\mathrm{dist}(V,W) \leq \varepsilon_1$,
        \begin{equation}\label{eq_VW}
            \H^{N-1}\bigl(W \cap B(x,r) \setminus p_W(K \cap B(x,r))\bigr) \leq \varepsilon r^{N-1}.
        \end{equation}
    \end{proposition}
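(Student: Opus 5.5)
Throughout, $V$ is the hyperplane $\nu_0^\perp$, where $\nu_0$ is the unit normal realizing $\beta(x,r)$. After rescaling (Remark \ref{rmk_scaling}) we work with $x=0$, $r=1$. Any $W$ with $\mathrm{dist}(V,W)\le\varepsilon_1$ has a unit normal $\nu$ with $\abs{\nu-\nu_0}\le C\varepsilon_1$.

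The plan is to combine Lemma \ref{lem_slicing} and Lemma \ref{lem_jump}. The slicing lemma controls the size of the holes $S_\nu$ in terms of the directional jump $J(\nu)$ and the gradient of $u$. The initialization lemma guarantees that the total jump $J(0,1)$ is bounded below. The set on the left of (\ref{eq_VW}) is $W\cap B(0,1)\setminus p_W(K\cap B(0,1))$. It consists of $S_\nu$ together with the thin shell $W\cap B(0,1)\setminus B(0,1-4\varepsilon')$, where $\varepsilon'$ is the flatness parameter used in Lemma \ref{lem_slicing}. The shell has measure $\le C\varepsilon'$. We therefore choose $\varepsilon'\sim\varepsilon$, which is allowed as soon as $\varepsilon_1\le\varepsilon'$, and it remains to show $\H^{N-1}(S_\nu)\le\varepsilon/2$.

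\textbf{First step: reduce the gradient to the symmetric gradient.} Lemma \ref{lem_slicing} involves $\int\abs{\nabla u}$, whereas our hypothesis only controls $\omega_p$. By flatness, $K\cap B(0,1)$ lies in the slab $\set{\abs{y\cdot\nu_0}\le\varepsilon_1}$. Apply Korn's inequality in the two half-balls off the slab. There $u$ is $W^{1,p}$, and after subtracting $a_1$ (resp.\ $a_2$) the gradient is controlled by $C\omega_p^{1/2}$. Since rigid motions do not change $e(u)$, we may instead apply the lemma to $u-a_1$, or work with the convex combination of $a_1,a_2$. The slicing estimate is presumably really driven by $\abs{e(u)}$ along lines in direction $\nu$ through differences of $u\cdot\nu$. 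So the right-hand side should become $C\varepsilon^{-1}(\omega_p+\text{small})^{1/2}$, which is small when $\varepsilon_1$ is small. The slab itself has small measure, so it contributes only a small error. I expect this reduction is how \cite{LL3} uses the lemma.

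\textbf{Second step: a lower bound on $J(\nu)$, the main obstacle.} We need $J(\nu)\ge c$ uniformly in $\nu$ near $\nu_0$. Lemma \ref{lem_jump} gives $\abs{b_1-b_2}+\abs{A_1-A_2}\ge\eta_0$, but this bounds the full jump, not its component along $\nu$. A purely tangential jump would make $J(\nu)$ tiny. To exclude this, I would argue by contradiction and compactness in the spirit of Lemma \ref{lem_jump}. Suppose $J(\nu)$ is small while $\omega_p$ is small. Then $(b_1-b_2)\cdot\nu\approx0$ and $(A_1-A_2)\nu\approx0$. A skew matrix killing $\nu$ acts only tangentially, so the jump is essentially a tangential sliding. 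In that case one builds a competitor that erases $K$ near $V$ and glues $u$ across the flat crack by the rigid motions. The normal component of the displacement is continuous, so the symmetric gradient of the glued field stays controlled. This would contradict Ahlfors regularity (\ref{eq_AF1}), exactly as in the end of the proof of Theorem \ref{thm_projection}.

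If this gluing step fails, the fallback is a case split. In the case where $D_1$ and $D_2$ lie in different components of $\Omega\setminus K$, we have full separation, and then $S_\nu$ must be small directly: any segment in direction $\nu$ through a hole would join $D_1$ to $D_2$ without meeting $K$. In the remaining case, Lemma \ref{lem_jump} applies, and the tangential-jump issue is handled by the gluing argument. Once $J(\nu)\ge c$, we get $\H^{N-1}(S_\nu)^2\le Cc^{-1}\varepsilon^{-1}\,\mathrm{small}$, and choosing $\varepsilon_1$ small depending on $\varepsilon$ concludes.
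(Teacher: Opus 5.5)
Your case split is the same as the paper's: when $D_1,D_2$ are separated you handle it directly, and otherwise you invoke Lemma \ref{lem_jump} and Lemma \ref{lem_slicing}. The second step, however, has a genuine gap, and it comes from insisting on $V=\nu_0^\perp$.

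The bound you aim for, $J(\nu)\ge c$ for all $\nu$ with $|\nu-\nu_0|\le C\varepsilon_1$, is false in general. Lemma \ref{lem_jump} only bounds the full jump $|b_1-b_2|+|A_1-A_2|$ from below. A purely tangential slip, with $(b_1-b_2)\cdot\nu_0=0$ and $(A_1-A_2)\nu_0=0$, is compatible with all the hypotheses. Think of a flat in-plane shear crack whose tips lie far outside the ball: $\beta$ and $\omega_p$ are small, $D_1$ and $D_2$ are connected around the tips, and the jump is large. In that situation $J(\nu)\le|\nu-\nu_0|\,(|b_1-b_2|+|A_1-A_2|)$, so no lower bound independent of $\varepsilon_1$ can hold near $\nu_0$.

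Your gluing competitor cannot exclude this either, because a tangential slip is not a rigid motion. Take $u=a_2+\chi(y\cdot\nu_0)(a_1-a_2)$ with $\chi$ rising from $0$ to $1$ over a thickness $\tau$. Then $e(u)=\chi'\,\mathrm{sym}\bigl((a_1-a_2)\otimes\nu_0\bigr)$. Its shear part has size of order $|a_1-a_2|/\tau$ even when $(a_1-a_2)\cdot\nu_0=0$. Only the normal--normal component $e(u)\nu_0\cdot\nu_0$ vanishes, which is exactly why slicing in the direction $\nu_0$ carries no information. The competitor therefore pays elastic energy of order $|b_1-b_2|^2+|A_1-A_2|^2$. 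This need not be small compared with the surface energy it removes, so no contradiction with (\ref{eq_AF1}) follows.

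The missing idea is that the statement only asks for \emph{some} hyperplane $V$. The paper uses this freedom by tilting: it takes $V=\tau_0^\perp$ with $|\tau_0-\nu_0|\le\varepsilon/2$, chosen as follows.
\begin{itemize}
\item For $\delta>0$, consider the directions $\nu\in\mathbf{S}^{N-1}$ with $|\nu\cdot(b_1-b_2)|<\delta|b_1-b_2|$, or with $|\nu\cdot(A_1^T-A_2^T)e_k|<\delta|(A_1^T-A_2^T)e_k|$ for some $k$. They lie in $N+1$ neighbourhoods of great spheres of total measure at most $C\delta$.
\item Take $\delta=\delta(N,\varepsilon)$ small compared with the measure of the cap $\{|\nu-\nu_0|\le\varepsilon/2\}$. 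This produces $\tau_0$ in that cap with $J(\tau_0)\ge C^{-1}\delta(|b_1-b_2|+|A_1-A_2|)\ge C^{-1}\delta\eta_0$.
\item The map $\nu\mapsto J(\nu)/(|b_1-b_2|+|A_1-A_2|)$ is Lipschitz with a universal constant. Hence the bound $J(\nu)\ge C^{-1}\delta$ persists for $|\nu-\tau_0|\le\varepsilon_1$ once $\varepsilon_1$ is small depending on $\delta$.
\item These $\nu$ still satisfy $|\nu-\nu_0|\le\varepsilon$, so Lemma \ref{lem_slicing} applies and gives $\H^{N-1}(S_\nu)^2\le C\delta^{-1}\varepsilon^{-1}\sqrt{\omega_1(0,1)}$. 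Choosing $\varepsilon_1$ small then concludes.
\end{itemize}
The order of constants matters: first the flatness threshold $\varepsilon_2\le\min(\varepsilon,\eta_0/2)$, then $\delta$, then $\varepsilon_1$.

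As for your first step, the paper uses the slicing estimate directly with right-hand side $C\varepsilon^{-1}\sqrt{\omega_1(0,1)}$ and $\omega_1\le\omega_p$. A Korn reduction of $\int|\nabla u|$ as you sketch would not make it small anyway, since after subtracting $a_1$ the gradient on the lower side is still close to $A_2-A_1$.
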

    \begin{proof}
        To simplify the notations, we may assume that $B(x,r) = B(0,1)$.
        Fix $\varepsilon_0$ smaller than in Lemma \ref{lem_jump}.
        Let $p \in (2(N-1)/N,2]$, let $\varepsilon > 0$, which we may assume to satisfy $\varepsilon < 1/4$ without loss of generality.
        Let $(u,K)$ be a topological quasiminimizer $(u,K)$ in $B(0,1)$ such that
        \begin{equation}\label{eq_obh}
            \beta(0,1) \leq \varepsilon_2,\quad \omega_p(0,1) \leq \varepsilon_1 \quad \text{and} \quad h(1) \leq \varepsilon_0,
        \end{equation}
        where $\varepsilon_1, \varepsilon_2 \in (0,1/100)$ are small constant to be chosen later (depending on $N$, $M$, $\A$, $p$ and $\varepsilon$).
        We denote by $\nu_0$ a unit normal to a hyperplane realizing the infimum in the definition of $\beta(0,1)$, and by $D_1, D_1$ the balls defined at the beginning of Section \ref{section_jump}.
        The letter $C$ denotes a generic constant $\geq 1$ which depends on $N$, $M$, $\A$ and $p$.

        If the balls $D_1$ and $D_2$ lie in distinct component of $\Omega \setminus K$, we check (\ref{eq_VW}) directly.
        Indeed, recall that $D_i = B(x_i,1/8)$, where $x_1 = (3/4) \nu_0$ and $x_2 = -(3/4)\nu_0$, and that 
        \begin{equation*}
            K \cap B(0,1) \subset \set{x \in B(0,1) | \abs{x \cdot \nu_0} \leq \varepsilon_2}.
        \end{equation*}
        Assume that $\varepsilon_2 \leq \varepsilon$. Then for any $\nu \in \mathbf{S}^{N-1}$ with $\abs{\nu - \nu_0} \leq \varepsilon$, we have
        \begin{equation*}
            K \cap B(0,1) \subset \set{x \in B(0,1) | \abs{x \cdot \nu} \leq 2\varepsilon}
        \end{equation*}
        and moreover $x_1 \cdot \nu > 2 \varepsilon$ and $x_2 \cdot \nu < -2 \varepsilon$.
        Letting $W = \nu^\perp$, we deduce that for every $x \in W \cap B(0,(1 - 4\varepsilon))$, the segment $x + [-2\varepsilon,2\varepsilon] \nu$ intersects $K$; otherwise it could be used to connect the centers of $D_1$ and $D_2$.
        It follows that the projection $p_W(K \cap B(0,1))$ contains $W \cap B(0,1-4\varepsilon)$.
        In this case we obtain
        \begin{equation*}
            \H^{N-1}\bigl(W \cap B(0,1) \setminus p_W(K \cap B(0,1))\bigr) \leq C \varepsilon.
        \end{equation*}

        We now turn to the case where $D_1$ and $D_2$ are not separated by $K$.
        Assume that $\varepsilon_1, \varepsilon_2$ are twice smaller than the constant $\eta_0$ in Lemma \ref{lem_jump}.
        In this situation, we may apply \ref{lem_jump} which yields
        \begin{equation*}
            J = \abs{b_1 - b_2} + \abs{A_1 - A_2} \geq C^{-1}.
        \end{equation*}
        Assuming in addition $\varepsilon_2 \leq \varepsilon$, Lemma \ref{lem_slicing} further shows that, for every unit vector $\nu \in \mathbf{S}^{N-1}$ with $\abs{\nu - \nu_0} \leq \varepsilon$,
        \begin{equation}\label{eq_slicing}
            J(\nu) \H^{N-1}(S_{\nu})^2 \leq C \varepsilon^{-1} \sqrt{\omega_1(0,1)}.
        \end{equation}
        From now on, the constant $\varepsilon_2$ is fixed.
        The next step is to find a direction $\tau_0 \in \mathbf{S}^{N-1}$ such that $J(\nu)$ admits a uniform lower bound for all $\nu$ in a neighborhood of $\tau_0$.

        For $\delta > 0$, the set of vectors $\nu \in \mathbf{S}^{N-1}$ such that
        \begin{equation*}
            \abs{\nu \cdot (b_1 - b_2)} < \delta \abs{b_1 - b_2}
        \end{equation*}
        is the intersection of the unit sphere with a $\delta$-neighborhood of hyperplane, hence $\H^{N-1}$ measure is at most $C \delta$.
        Similarly, for each vector $e_k$ of the canonical basis, the set of $\nu \in \mathbf{S}^{N-1}$ such that
        \begin{equation*}
            \abs{\nu \cdot (A_1^T - A_2^T) e_k} < \delta \bigl| (A_1^T - A_2^T) e_k \bigr|
        \end{equation*}
        has $\H^{N-1}$ measure at most $C \delta$.
        As a consequence, we may fix $\delta > 0$ sufficiently small (depending only on $N$ and $\varepsilon$) so that there exists a vector $\tau_0 \in \mathbf{S}^{N-1}$ with $\abs{\nu_0 - \tau_0} \leq \varepsilon/2$ and
        \begin{equation*}
            \abs{\tau_0 \cdot (b_1 - b_2)} \geq \delta \abs{b_1 - b_2}
        \end{equation*}
        as well as
        \begin{equation*}
            \abs{\tau_0 \cdot (A_1^T - A_2^T) e_k} \geq \delta \abs{(A_1^T - A_2^T) e_k} \quad \forall k.
        \end{equation*}
        (We fix $\delta$ for the remainder of the proof.)
        From this, we deduce that
        \begin{equation*}
            J(\tau_0) \geq C_0^{-1} \delta  \bigl(\abs{b_1 - b_2} + \abs{A_1 - A_2}\bigr),
        \end{equation*}
        for some universal constant $C_0 \geq 1$.
        Observe that the map
        \begin{equation*}
            \nu \mapsto \frac{\abs{(b_1 - b_2) \cdot \nu} + \abs{(A_1 - A_2) \nu}}{\abs{b_1 - b_2} + \abs{A_1 - A_2}}
        \end{equation*}
        is Lipschitz with a universal constant.
        Hence, we may assume that $\varepsilon_1 \in (0,\varepsilon/2)$ is sufficiently small (depending only on $N$, $\varepsilon$) so that for every $\nu \in \mathbf{S}^{N-1}$ with $\abs{\nu - \tau_0} \leq \varepsilon_1$,
        \begin{equation*}
            J(\nu) \geq (2C_0)^{-1} \delta \bigl(\abs{b_1 - b_2} + \abs{A_1 - A_2}\bigr),
        \end{equation*}
        and in particular $J(\nu) \geq C^{-1} \delta$.

        Now, (\ref{eq_slicing}) implies that for all $\nu \in \mathbf{S}^{N-1}$ with $\abs{\nu - \tau_0} \leq \varepsilon_1$,
        \begin{equation*}
            \H^{N-1}(S_{\nu})^2 \leq C \delta^{-1} \varepsilon^{-1} \sqrt{\omega_1(0,1)}.
        \end{equation*}
        Letting $W = \nu^\perp$, we readily observe
        $\H^{N-1}\bigl(W \cap B(0,1) \setminus B(0,(1-4\varepsilon))\bigr) \leq C \varepsilon$,
        so that in fact
        \begin{equation*}
            \H^{N-1}\bigl(W \cap B(0,1) \setminus p_V(K \cap B(0,1))\bigr)^2 \leq C \delta^{-1} \varepsilon^{-1} \sqrt{\omega_1(0,1)} + C \varepsilon^2.
        \end{equation*}
        As $\omega_1(0,1) \leq \omega_p(0,1)$, we can finally choose $\varepsilon_1$ small enough in (\ref{eq_obh}) so that
        \begin{equation*}
            \H^{N-1}\bigl(W \cap B(0,1) \setminus p_V(K \cap B(0,1))\bigr) \leq C \varepsilon.
        \end{equation*}
        This completes the proof of Proposition \ref{prop_projection}.
    \end{proof}

    \section{Plenty of big projections}

    The following two results are immediate consequences of Corollary \ref{cor_nice} and Proposition \ref{prop_projection}, and their proof is omitted.
    The first one is the \emph{uniform concentration} property; in this way, our argument provides a variant of the proof in \cite{LL3} and leads to a stronger result, as it extends to quasiminimizers.

    \begin{corollary}[Uniform concentration]
        There exists a constant $\varepsilon_0 > 0$ (which depends on $N$, $M$, $\A$) and for all $\varepsilon > 0$, there exists $C(\varepsilon) \geq 1$ (which depends on $N$, $M$, $\A$, $\varepsilon$) such that the following holds.
        If $(u,K)$ is a topological quasiminimizer in a domain $\Omega$, then for all $x \in K$ and all $r > 0$ with $B(x,r) \subset \Omega$ and $h(r) \leq \varepsilon_0$, there exists $y \in B(y,r/2)$ and $t \in [C(\varepsilon)^{-1}r,r/2]$ such that
        \begin{equation*}
            \H^{N-1}(K \cap B(y,t)) \geq (1 - \varepsilon) \omega_{N-1} t^{N-1},
        \end{equation*}
        where $\omega_{N-1}$ is the measure of $N-1$-dimensional unit disk.
    \end{corollary}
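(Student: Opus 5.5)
The plan is to combine Corollary \ref{cor_nice} with Proposition \ref{prop_projection}, and then use the fact that orthogonal projection onto a hyperplane is $1$-Lipschitz, so that $\H^{N-1}(K \cap B(y,t)) \geq \H^{N-1}(p_W(K \cap B(y,t)))$.

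Fix $\varepsilon > 0$ and some exponent $p \in (2(N-1)/N,2)$, say $p = (2N-1)/N$. Let $\varepsilon_1 = \varepsilon_1(\varepsilon')$ be the constant given by Proposition \ref{prop_projection} for the accuracy $\varepsilon' := \varepsilon\,\omega_{N-1}$, and take $\varepsilon_0$ smaller than in Corollary \ref{cor_nice} and in Proposition \ref{prop_projection}. Given $x \in K$ and $r$ with $h(r) \leq \varepsilon_0$, Corollary \ref{cor_nice} applied with $\varepsilon_1$ in place of $\varepsilon$ produces $y \in B(x,r/2)$ and $t \in [C(\varepsilon_1)^{-1}r, r/2]$ such that $\beta(y,t) + \omega_p(y,t) \leq \varepsilon_1$.

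A small point must be settled here: Proposition \ref{prop_projection} is stated for centres on $K$. If the $y$ produced is not on $K$, I would go back to the proof of Corollary \ref{cor_nice}. There $y$ comes from Corollary \ref{cor_carleson}, whose proof actually gives $y \in K$. Alternatively, flatness allows one to move to a nearby point of $K$ and shrink the radius by a factor $2$, at the cost of a constant in $\beta$ and $\omega_p$. Since $h$ is nondecreasing, $h(t) \leq h(r) \leq \varepsilon_0$ still holds.

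Proposition \ref{prop_projection} in $B(y,t)$ then yields a hyperplane $V$ such that
\begin{equation*}
\H^{N-1}\bigl(V \cap B(y,t) \setminus p_V(K \cap B(y,t))\bigr) \leq \varepsilon\,\omega_{N-1} t^{N-1},
\end{equation*}
where $V$ is viewed as the translate through $y$. Since $V \cap B(y,t)$ is an $(N-1)$-disk of measure $\omega_{N-1}t^{N-1}$, we get $\H^{N-1}(p_V(K \cap B(y,t))) \geq (1-\varepsilon)\omega_{N-1}t^{N-1}$. Because $p_V$ is $1$-Lipschitz, it does not increase $\H^{N-1}$, and the claimed lower bound on $\H^{N-1}(K \cap B(y,t))$ follows, with $C(\varepsilon) := C(\varepsilon_1(\varepsilon\,\omega_{N-1}))$.

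The only delicate step is the bookkeeping of the centre, namely ensuring that the ball delivered by Corollary \ref{cor_nice} is centred on $K$, or can be recentred, so that Proposition \ref{prop_projection} applies. Everything else is a direct chaining of constants.
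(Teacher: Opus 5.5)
Your proof is correct and is exactly the argument the paper has in mind when it calls this corollary an immediate consequence of Corollary \ref{cor_nice} and Proposition \ref{prop_projection} (the paper omits the proof): choose $(y,t)$ with $\beta(y,t)+\omega_p(y,t) \leq \varepsilon_1$, obtain a hyperplane whose disk in $B(y,t)$ is covered by $p_V(K\cap B(y,t))$ up to measure $\varepsilon\,\omega_{N-1}t^{N-1}$, and conclude because $p_V$ is $1$-Lipschitz. Your handling of the centre via the proof of Corollary \ref{cor_carleson}, which indeed only produces points of $K$, is the right fix; the alternative recentring you mention would additionally need $K \cap B(y,t/2) \neq \emptyset$, which small $\beta(y,t)$ alone does not guarantee, so rely on the first justification.
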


    The second one is the \emph{plenty of big projections} property, which we use to establish the uniform rectifiability of fractures.

    \begin{corollary}[Plenty of big projections]\label{cor_pbp}
        There exists constants $\varepsilon_0 > 0$, $\delta > 0$ and $C \geq 1$ (which depend on $N$, $M$, $\A$) such that the following holds.
        If $(u,K)$ is a topological quasiminimizer in a domain $\Omega$, then for all $x \in K$ and all $r > 0$ with $B(x,r) \subset \Omega$ and $h(r) \leq \varepsilon_0$, there exists a hyperplane $V \in G(N-1,N)$ such that for all $W \in G(N-1,N)$ with $\mathrm{dist}(V,W) \leq \delta$,
        \begin{equation*}
            \H^{N-1}\bigl(p_W(K \cap B(x,r)\bigr) \geq C^{-1} r^{N-1}
        \end{equation*}
    \end{corollary}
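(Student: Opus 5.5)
The plan is to combine Corollary \ref{cor_nice} with Proposition \ref{prop_projection}, fixing all constants once with $\varepsilon$ universal. Fix $p = (2N-1)/N \in (2(N-1)/N,2)$. Take $\varepsilon = \tfrac12 \omega_{N-1}$, or any fixed universal fraction of the volume of the unit $(N-1)$-disk. Let $\varepsilon_1 = \varepsilon_1(\varepsilon)$ be the constant given by Proposition \ref{prop_projection} for this $p$ and $\varepsilon$. Then let $C(\varepsilon_1)$ be the constant of Corollary \ref{cor_nice} applied with $\varepsilon_1$ in place of $\varepsilon$. Finally, let $\varepsilon_0$ be the minimum of the thresholds appearing in Corollary \ref{cor_nice} and Proposition \ref{prop_projection}. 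All these quantities depend only on $N$, $M$, $\A$.

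Given $x \in K$ and $r$ with $B(x,r) \subset \Omega$ and $h(r) \leq \varepsilon_0$, Corollary \ref{cor_nice} produces $y \in B(x,r/2)$ and $t \in [C(\varepsilon_1)^{-1} r, r/2]$ such that $\beta(y,t) + \omega_p(y,t) \leq \varepsilon_1$. The proof of Corollary \ref{cor_nice} actually gives $y \in K$. If the statement is read loosely, I would pass to a point of $K$ near $y$, or note that the flatness and Carleson selection are made among points of $K$. Since $h$ is nondecreasing, $h(t) \leq h(r) \leq \varepsilon_0$, and $B(y,t) \subset B(x,r) \subset \Omega$. So Proposition \ref{prop_projection} applies in $B(y,t)$. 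It gives a hyperplane $V$ such that every $W$ with $\mathrm{dist}(V,W) \leq \varepsilon_1$ satisfies
\begin{equation*}
    \H^{N-1}\bigl(W \cap B(y,t) \setminus p_W(K \cap B(y,t))\bigr) \leq \varepsilon t^{N-1}.
\end{equation*}

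For such $W$, the affine slice $(y + W) \cap B(y,t)$ has measure $\omega_{N-1} t^{N-1}$. This is where the projection estimate must be read correctly. In Proposition \ref{prop_projection}, after translating $y$ to the origin, $W \cap B(y,t)$ means the disk of radius $t$ in $y + W$. Projecting $K \cap B(y,t)$ onto $y + W$ differs from $p_W$ only by a translation inside $W$, so measures are unchanged. Therefore
\begin{equation*}
    \H^{N-1}\bigl(p_W(K \cap B(x,r))\bigr) \geq \H^{N-1}\bigl(p_W(K \cap B(y,t))\bigr) \geq (\omega_{N-1} - \varepsilon) t^{N-1} \geq \tfrac12 \omega_{N-1} C(\varepsilon_1)^{-(N-1)} r^{N-1}.
\end{equation*}
The first inequality holds by monotonicity of projections under inclusion. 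This gives the corollary with $\delta = \varepsilon_1$ and $C = 2\omega_{N-1}^{-1} C(\varepsilon_1)^{N-1}$.

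The only delicate points are bookkeeping. First, the dependence order of constants must not be circular: $\varepsilon$ is fixed, then $\varepsilon_1$, then $C(\varepsilon_1)$. Second, the hypotheses $y \in K$ and $h(t) \leq \varepsilon_0$ must hold for the subball. Third, the translation convention for $W \cap B(y,t)$ in the projection estimate must be handled as above. I expect no real obstacle beyond these.
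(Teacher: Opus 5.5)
Your proposal is correct and follows the route the paper intends. The paper omits this proof, calling it an immediate consequence of Corollary~\ref{cor_nice} and Proposition~\ref{prop_projection}, and your ordering of constants ($\varepsilon$ a fixed fraction of $\omega_{N-1}$, then $\varepsilon_1$, then $C(\varepsilon_1)$, with $\delta=\varepsilon_1$) together with the observation that the selected centre $y$ lies in $K$ is exactly what fills it in. One small correction: the projection onto $y+W$ differs from $p_W$ by translation by $p_{W^\perp}(y)$, which is normal to $W$ rather than inside it, but that is still an isometry, so your measure comparison holds.
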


    We finally deduce our main result.

    \begin{corollary}[Uniform rectifiability]
        There exists constants $\varepsilon_0 > 0$ and $C \geq 1$ (which depends on $N$, $M$, $\A$) such that the following holds.
        If $(u,K)$ is a topological quasiminimizer in a domain $\Omega$, then for all $x \in K$ and all $r > 0$ with $B(x,2r) \subset \Omega$ and $h(2r) \leq \varepsilon_0$,
        \begin{equation*}
            \text{$K \cap B(x,r)$ is contained in a uniformly rectifiable set $E$ (with BPLG) of constant $C$.}
        \end{equation*}
    \end{corollary}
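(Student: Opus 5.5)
The plan is to combine Corollary \ref{cor_pbp} with Orponen's theorem \cite{Orponen}, which states that a closed Ahlfors-regular set with plenty of big projections (at all locations and scales) has big pieces of Lipschitz graphs, and hence is uniformly rectifiable. The difficulty is that $K$ only has these properties locally: at scales $t \le r$ and centers in $B(x,2r)$ where $h \leq \varepsilon_0$. So we first build a global set $E \supset K \cap B(x,r)$ that is Ahlfors-regular with PBP at every location and scale, with constants depending only on $N$, $M$, $\A$.

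\textbf{Construction of $E$.} Set $E := (K \cap \overline{B}(x,r)) \cup \partial B(x,r) \cup P$, where $P$ is an $(N-1)$-plane through $x$. Adjoining a sphere and a plane is the usual device: at scales $\gtrsim r$, the plane supplies Ahlfors regularity and big projections in a neighborhood of its own direction, with universal constants. The sphere $\partial B(x,r)$ handles the cut-off: a ball centered on $K$ near $\partial B(x,r)$ may contain little of $K \cap \overline{B}(x,r)$, but it contains a comparable piece of the sphere, which is a uniformly rectifiable (indeed BPLG) set. Ahlfors upper bounds for $E$ follow from (\ref{eq_AF0}) applied in balls $B(y,t) \subset B(x,2r)$ (valid since $h(t)\le h(2r)\le\varepsilon_0$), together with the trivial bounds for the sphere and plane. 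Lower bounds hold in every ball centered on $E$: if the center lies on $K \cap \overline{B}(x,r)$ and $t \le r$, use (\ref{eq_AF1}); otherwise use the sphere or the plane, possibly after moving to a nearby point of $E$.

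\textbf{PBP for $E$.} Take $y \in E$ and $t>0$, and distinguish cases.
\begin{enumerate}[label=(\roman*)]
\item If $y \in K \cap \overline{B}(x,r)$, $t \le r/2$, and $B(y,t/4)$ stays away from $\partial B(x,r)$ (say $\mathrm{dist}(y,\partial B(x,r)) \geq t/4$), apply Corollary \ref{cor_pbp} to $K$ in $B(y,t/4) \subset B(x,2r)$. This gives a plane $V$ such that every $W$ with $\mathrm{dist}(V,W) \leq \delta$ satisfies $\H^{N-1}(p_W(E\cap B(y,t))) \geq C^{-1} t^{N-1}$.
\item If $y$ is within $t/4$ of $\partial B(x,r)$, then $B(y,t)$ contains a cap of the sphere of size $\sim t$ (when $t\le r$). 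A cap has big projections onto all planes close to its tangent plane.
\item If $t \gtrsim r$, or $y$ lies on $P$ or on $\partial B(x,r)$, the plane or sphere pieces directly give PBP.
\end{enumerate}
Since projections of a subset are contained in projections of the set, adding pieces never destroys PBP.

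Orponen's theorem then yields BPLG for $E$ with constants depending on $N$, $M$, $\A$. BPLG implies BPBI, which is uniform rectifiability by \cite[Theorem 1.57]{DS93}. Finally $K \cap B(x,r) \subset E$.

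\textbf{Main obstacle.} Ahlfors regularity and PBP must hold for \emph{all} centers in $E$ and all scales up to $\mathrm{diam}(E)=\infty$, uniformly across the interface between $K$ and the auxiliary pieces. The delicate case is a center $y \in K$ just inside $\partial B(x,r)$ at a scale where $K$ is cut. My first attempt is to use the sphere cap as above. If the mixed Ahlfors regularity near the intersection $K \cap \partial B(x,r)$ proves awkward, the fallback is to replace the round cutoff by a radius $r' \in (r,3r/2)$ for which $\partial B(x,r')$ meets $K$ in an $\H^{N-1}$-null set.
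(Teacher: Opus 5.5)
Your proposal is correct and follows the paper's proof. The paper uses the same auxiliary set, $(K\cap B(x,r))\cup\partial B(x,r)\cup(P\setminus B(x,r))$; your use of the full plane $P$ and of $\overline{B}(x,r)$ is harmless and even simplifies centers on $P$. It then applies Corollary \ref{cor_pbp} in balls well inside $B(x,r)$, uses the big projections of sphere caps and plane disks elsewhere, and concludes with Orponen's theorem, exactly as you do.

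Only your case thresholds need tidying. Case (i) works for every $t$ once $\mathrm{dist}(y,\partial B(x,r))\ge t/4$, the sphere cap covers $t<2r$, and the plane covers $t\ge 2r$ and centers on $P$. Your fallback radius $r'$ is unnecessary, because Ahlfors upper bounds add over the finitely many pieces and lower bounds need only one piece.
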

    \begin{proof}
        This is essentially an application of \cite[Theorem 1.6]{Orponen}, which states that plenty of big projections (PBP) implies big pieces of Lipschitz graphs (BPLG).
        A minor adaptation is required since our PBP property (Corollary \ref{cor_pbp}) is local, whereas \cite[Theorem 1.6]{Orponen} is global.
        Fix $x \in K$ and $r > 0$ such that $B(x,2r) \subset \Omega$ and $h(2r) \leq \varepsilon_0$, where $\varepsilon_0$ is smaller than in Proposition \ref{prop_AR} and Corollary \ref{cor_pbp}.
        Consider any hyperplane $P$ passing through $x$ and set
        \begin{equation*}
            E = \bigl(K \cap B(x,r)\bigr) \cup \partial B(x,r) \cup \bigl(P \setminus B(x,r)\bigr).
        \end{equation*}
        The set $E$ is closed and Ahlfors-regular (we omit the proof). Let us verify that $E$ has plenty of big projections.
        The key point is that hyperplanes and spheres have plenty of big projections: there exists universal constants $\delta > 0$ and $C \geq 1$ such that for all $y \in P$ and $\rho > 0$,
        \begin{equation}\label{eq_plane_bp}
            \exists V \in G(N-1,N),\ \forall W \in B(V,\delta),\ \H^{N-1}\bigl(p_W(P \cap B(y,\rho)\bigr) \geq C^{-1} \rho^{N-1}
        \end{equation}
        and for all $y \in \partial B(x,r)$ and $0 < \rho < 2r$,
        \begin{equation}\label{eq_sphere_bp}
            \exists V \in G(N-1,N),\ \forall W \in B(V,\delta),\ \H^{N-1}\bigl(p_W(\partial B(x,r) \cap B(y,\rho)\bigr) \geq C^{-1} \rho^{N-1}.
        \end{equation}

        Now, fix $y \in E$ and $\rho > 0$.
        Our first case is when $B(y,\rho/2) \subset B(x,r)$.
        Then necessarily $y \in K$, and we may apply Corollary \ref{cor_pbp} in $B(y,\rho/2)$.
        Since $K \cap B(y,\rho/2) \subset E \cap B(y,\rho)$, it follows that $E$ has plenty of big projections in $B(y,\rho)$.

        Our second case is when $B(y,3\rho/4)$ contains a point $z \in P \setminus B(x,r+\rho/100)$.
        We apply (\ref{eq_plane_bp}) in $B(z,\rho/100)$ and, because $P \cap B(z,\rho/100) \subset E \cap B(y,\rho)$, we again obtain that $E$ has plenty of big projections in $B(y,\rho)$.

        We are left with the case where $B(y,\rho/2) \not\subset B(x,r)$ and $B(y,3\rho/4)$ does not meet $V \setminus B(x,r+\rho/100)$.
        From the second condition, we immediately obtain $\abs{x - y} < r + \rho/100$.
        In particular $B(y,\rho/2) \cap B(x,r) \ne \emptyset$, 
        and since $B(y,\rho/2) \not\subset B(x,r)$,
        there exists a point $z \in B(y,\rho/2) \cap \partial B(x,r)$.
        We also check that $\rho < 4 r$: if this were not the case, then and using again $\abs{x - y} < r + \rho/100$, we would have $$\overline{B}(x,r+\rho/100) \subset B(y,2r+\rho/50) \subset B(y,3\rho/4)$$ and therefore any point of $V \cap \partial B(x,r+\rho/100)$
        would belong to both $B(y,3\rho/4)$ and $V \setminus B(x,r+\rho/100)$.
        Since $\rho/2 < 2r$, we may apply (\ref{eq_sphere_bp}) in $B(z,\rho/2)$.
        As $\partial B(x,r) \cap B(z,\rho/2) \subset E \cap B(y,\rho)$, this shows once more that $E$ has plenty of big projections in $B(y,\rho)$ 
    \end{proof}

    \begin{appendices}
        \section{An auxiliary lemma on affine maps}

        \begin{lemma}\label{lem_rigid}
            For real number $p \geq 1$, for all all affine map $a : \R^{n} \to \R^{m}$, for all ball $B \subset \R^{n}$ and for all Borel set $E \subset B$ such that $\abs{E} > 0$, we have
            \begin{equation}\label{eq_rigid}
                \biggl(\fint_E \abs{a(x)}^p \dd{x}\biggr)^{1/p} \geq C^{-1} \norm{a}_{L^{\infty}(B)} \biggl(\frac{\abs{E}}{\abs{B}}\biggr),
            \end{equation}
            where $C \geq 1$ depends on $n$, $m$, $p$.
        \end{lemma}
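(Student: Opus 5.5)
The statement to prove is Lemma \ref{lem_rigid}: a lower bound on the $L^p$ average of an affine map over a subset $E$ of a ball. I would use three reductions: scaling, finite dimensionality plus compactness, and a sublevel-set volume estimate.

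\textbf{Reduction.} Both sides of (\ref{eq_rigid}) are invariant under translating and dilating the domain, if we replace $a$ by $a(x_B + r\,\cdot)$, which is still affine. So we may assume $B = B(0,1)$. Both sides are also $1$-homogeneous in $a$, so we may normalize $\norm{a}_{L^\infty(B)} = 1$; the case $a=0$ is trivial. The inequality then reads $\bigl(\fint_E |a|^p\bigr)^{1/p} \geq C^{-1}|E|/|B|$.

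\textbf{Sublevel-set estimate.} The key claim is that there is $c>0$, depending only on $n,m$, such that
\begin{equation*}
\bigl|\set{x\in B \mid |a(x)|\leq s}\bigr| \leq c^{-1} s\,|B|
\end{equation*}
for every affine $a$ with $\norm{a}_{L^\infty(B)}=1$ and every $s\in(0,1]$.

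To prove it, write $a(x)=b+Ax$. Since $\norm{a}_{L^\infty(B)} = 1$, at least one of the following holds:
\begin{itemize}
\item $|b|\geq 1/2$;
\item $A$ has an entry, say $A_{ij}$, of size $\geq c_{n,m}>0$.
\end{itemize}
Indeed, if both failed then $|a|\leq |b|+|A|<1$ on $B$ for a suitable choice of $c_{n,m}$.

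In the second case, look at the single component $a_i(x)=b_i+\sum_j A_{ij}x_j$, which is a scalar affine function with gradient of norm $\geq c_{n,m}$. The set where $|a_i|\leq s$ is a slab of width $2s/|\nabla a_i|$. Its intersection with $B$ therefore has measure at most $C s/c_{n,m}$, and $|a|\leq s$ forces $|a_i|\leq s$.

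In the first case, the bound is only needed for $s$ below a threshold. If $|A|$ is small compared to $|b|$, then $|a|\geq 1/4$ on all of $B$, so the sublevel set is empty for $s<1/4$. Otherwise $|A|$ is bounded below, and we are back in the slab argument. For $s\geq 1/4$ the bound is trivial after enlarging $c^{-1}$.

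A cleaner alternative is a compactness argument. The set $\mathcal K$ of normalized affine maps is compact, and the function $a\mapsto \sup_{s}\, s^{-1}\,|\set{|a|\leq s}\cap B|$ is upper semicontinuous and finite on $\mathcal K$. That route requires checking the semicontinuity, so I would go with the explicit case split above.

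\textbf{Conclusion.} Choose $s = \tfrac{c}{2}\,|E|/|B|$. Then $|\set{x\in B \mid |a|\leq s}|\leq |E|/2$, so at least half of $E$ (in measure) satisfies $|a|>s$. Hence
\begin{equation*}
\fint_E |a|^p \geq \tfrac12\, s^p = \tfrac12\bigl(\tfrac c2\bigr)^p (|E|/|B|)^p.
\end{equation*}
Taking $p$-th roots gives (\ref{eq_rigid}) with $C = 2^{1/p}\cdot 2/c$, which depends only on $n,m,p$.

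The main obstacle is the sublevel-set estimate, specifically its uniformity over all affine maps; the linear dependence on $|E|/|B|$ in (\ref{eq_rigid}) comes precisely from the linear-in-$s$ slab bound. Everything else is routine.
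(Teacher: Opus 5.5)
Your proof is correct and follows essentially the same route as the paper. You rescale to the unit ball, prove a sublevel-set bound $\abs{\{x\in B : \abs{a(x)}\le t\}}\le C\,t/\norm{a}_{L^\infty(B)}$ by splitting into the case where the constant term dominates (so the set is empty for small $t$) and the slab case, and choose $t$ so that this set has measure at most $\abs{E}/2$. The only cosmetic difference is that the paper first reduces to scalar components $a(x)=c+v\cdot x$ and normalizes by $\abs{c}+\abs{v}$, whereas you normalize $\norm{a}_{L^\infty(B)}=1$ and pick a component with large gradient inside the vector-valued argument.
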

        \begin{proof}
            By scale invariance of the inequality, we may assume that $B = B(0,1)$.
            Writing $a_1,\ldots,a_m$ for the coordinates of $a$, we have for all $x \in \R^n$, 
            \begin{equation*}
                C^{-1} \sum_{i=1}^m \abs{a_i(x)}^p \leq \abs{a(x)}^p \leq C \sum_{i=1}^m \abs{a_i(x)}^p
            \end{equation*}
            so it suffices to prove (\ref{eq_rigid}) for each coordinate of $a$.
            Thus we may restrict to the case where $a$ is an affine map $a : \R^n \to \R$ of the form $a(x) = c + v \cdot x$, where $c, v \in \R^n$.
            We can also assume $(c,v) \ne 0$, otherwise (\ref{eq_rigid}) is trivial.

            The main point is to show that there exists a universal constant $C_0 \geq 1$ such that for all $t > 0$,
            \begin{equation}\label{eqE}
                \abs{\set{x \in B | \abs{a(x)} \leq t}} \leq \frac{C_0 t}{\abs{c} + \abs{v}}.
            \end{equation}
            If $\abs{c} \geq 2 \abs{v}$, then $\abs{a(x)} \geq \abs{c}/2$ in $B$, and we distinguish two cases.
            Either $\abs{c}/2 > t$, in which case
            \begin{equation*}
                \abs{\set{x \in B | \abs{a(x)} \leq t}} = 0
            \end{equation*}
            or $\abs{c}/2 \leq t$, and then we have the trivial bound
            \begin{equation*}
                \abs{\set{x \in B | \abs{a(x)} \leq t}} \leq \abs{B} \leq \frac{2\abs{B}t}{\abs{c}} \leq \frac{3\abs{B}t}{\abs{c} + \abs{v}}.
            \end{equation*}
            Next, if $\abs{c} < 2 \abs{v}$, then in particular $v \ne 0$, and the condition $\abs{a(x)} \leq t$ describes a $(\abs{v}^{-1} t)$-neighborhood of a hyperplane, so
            \begin{equation*}
                \abs{\set{x \in B | \abs{a(x)} \leq t}} \leq \frac{C t}{\abs{v}} \leq \frac{3 C t}{\abs{c} + \abs{v}},
            \end{equation*}
            for some universal constant $C \geq 1$.
            This proves (\ref{eqE}).

            For $t > 0$, we then estimate
            \begin{align*}
                \int_E \abs{a(x)}^p \dd{x} &\geq t^p \abs{\set{x \in E | \abs{a(x)} > t}}\\
                                           &\geq t^p \Bigl(\abs{E} - \abs{\set{x \in B | \abs{a(x)} \leq t}}\Bigr)\\
                                           &\geq t^p \Bigl(\abs{E} - \frac{C_0 t}{\abs{c} + \abs{v}}\Bigr)
            \end{align*}
            and the desired inequality follows by choosing $t$ so that
            \begin{equation*}
                \abs{E} - \frac{C_0 t}{\abs{c} + \abs{v}} = \frac{\abs{E}}{2}.
            \end{equation*}
        \end{proof}
    \end{appendices}

    \bibliographystyle{plain}
    \bibliography{biblio_griffith}

\medskip
\noindent
\textsc{Université de Lorraine, CNRS, IECL, F-54000 Nancy, France}\\
\textit{Email address:} \url{camille.labourie@univ-lorraine.fr}

    \end{document}